\numberwithin{equation}{section}
\def\proof{\smallskip\noindent {\it Proof: \ }}
\newtheorem{theorem}{Theorem}[section]
\newtheorem{proposition}[theorem]{Proposition}
\newtheorem{conjecture}[theorem]{Conjecture}
\newtheorem{lemma}[theorem]{Lemma}
\newtheorem{problem}[theorem]{Problem}
\theoremstyle{definition}
\newtheorem{definition}[theorem]{Definition}
\DeclareMathOperator{\cs}{cs}
\DeclareMathOperator{\sn}{sn}
\newcommand{\N}{{\mathbb N}}
\newcommand{\R}{{\mathbb R}}
\newcommand{\A}{{\mathcal A}}
\newcommand{\C}{{\mathcal C}}
\newcommand{\T}{{\mathcal T}}
\title{Neighborly spheres and transversal numbers}
\author{
	Isabella Novik\thanks{Research of IN is partially\textsl{} supported by NSF grant DMS-1953815 and by Robert R.~\&  Elaine F.~Phelps Professorship in Mathematics. }\\
	\small Department of Mathematics\\[-0.8ex]
	\small University of Washington\\[-0.8ex]
	\small Seattle, WA 98195-4350, USA\\[-0.8ex]
	\small \texttt{novik@uw.edu}
	\and 
	Hailun Zheng\thanks{Research of HZ is partially supported by a postdoctoral fellowship from ERC grant 716424 - CASe.}\\
	\small Department of Mathematical Sciences\\[-0.8ex]
	\small University of Copenhagen\\[-0.8ex]
	\small Universitesparken 5, 2100 Copenhagen, Denmark \\[-0.8ex]
	\small \texttt{hz@math.ku.dk}
}
\begin{document}
\maketitle
	\section{Introduction}
	The goal of this paper is to advertise several old and new problems related to the number of simplicial spheres (Section 3), the number of neighborly simplicial spheres (Section 4), and the number of centrally symmetric simplicial spheres that are cs-neighborly (Section 5). Neighborly spheres provide maximizers for the upper bound type problems. They are also natural candidates for having large transversal numbers. Thus, in the second part of the paper (Section 6), we discuss problems as well as some results aimed at understanding transversal numbers of  simplicial spheres. In particular, we estimate transversal ratios of some of the neighborly spheres presented in Sections 4 and 5 and conclude that for $k\geq 2$, there exist families of simplicial $2k$-spheres such that their transversal ratios are at least $2/5-o(1)$ as the number of vertices tends to infinity. (The previously known bound was $1/(k+1)$). 
	
	The tools developed and the results proved in the last decade suggest that significant progress on some of these problems might be just around the corner. In short, we believe that these problems deserve to be looked at right now!
	
	\section{Basic definitions}
	We assume that the reader is familiar with basics of convex polytopes and simplicial complexes. However, for completeness sake,  below we review some of the definitions, especially those related to simplicial complexes. For all undefined terminology related to polytopes, the reader is referred to Ziegler's book \cite{Ziegler}.
	
	In this paper we only consider finite simplicial complexes.  A {\em simplicial complex} $\Delta$ on a finite vertex set $V$ is a collection of subsets of $V$ that is closed under inclusion, i.e., if $F\in\Delta$ and $G\subset F$, then $G\in\Delta$. We also usually assume that every singleton is an element of $\Delta$, that is, $\{v\}\in\Delta$ for all $v\in V$. The elements of $V$ are called {\em vertices} while the elements of $\Delta$ are called {\em faces}. For a vertex $v$, we abuse notation and write $v\in\Delta$ instead of $\{v\}\in\Delta$.
	
	We say that $F\in\Delta$ is an {\em $i$-face} or an {\em $i$-dimensional face} if $|F|=i+1$. The {\em dimension} of $\Delta$ is defined as $\dim\Delta \coloneqq \max\{\dim F : F\in\Delta\}$. The number of $i$-faces is denoted by $f_i(\Delta)$. We refer to $0$-faces as {\em vertices}, $1$-faces as {\em edges}, and the maximal under inclusion faces as {\em facets}. We say that $\Delta$ is {\em pure} if all facets of $\Delta$ have the same dimension. For instance, the complex whose faces are the empty set, the vertices $v_1,\ldots,v_n$, and the edges $\{v_i,v_{i+1}\}$ for all $1\leq i\leq n-1$, is a pure $1$-dimensional complex known as a {\em path}. The notation we adopt for this complex is $(v_1,v_2,\ldots,v_n)$.
	
	Two examples of simplicial complexes worth mentioning are the {\em simplex on $V$}, denoted $\overline{V}$, and the {\em boundary complex} of the simplex $\overline{V}$, denoted $\partial \overline{V}$. The former complex consists of {\em all} subsets of $V$ while the latter complex consists of all subsets of $V$ but $V$ itself. 
	
	An important operation on simplicial complexes is that of joins: If $\Delta$ and $\Gamma$ are simplicial complexes on disjoint vertex sets $V$ and $V'$, then the \textit{join} of $\Delta$ and $\Gamma$ is the simplicial complex $\Delta * \Gamma = \{\sigma \cup \tau \ : \ \sigma \in \Delta \text{ and } \tau \in \Gamma\}$ with vertex set $V\cup V'$. A special case is the \emph{cone} over $\Delta$ with apex $v$, $\Delta * v$, defined as  the join of $\Delta$ and the simplex $\overline{\{v\}}$.
	
	A {\em polytope} is the convex hull of finitely many points in a Euclidean space. One example is given by a (geometric) simplex: the convex hull of affinely independent points. A {\em face} of a polytope $P$ is the intersection of $P$ with any supporting hyperplane. A polytope $P$ is {\em simplicial} if all facets of $P$ are simplices.
	
	There is a natural way to associate with every simplicial complex $\Delta$ its {\em geometric realization}, $\|\Delta\|$: it is built out of geometric simplices in a way that every two simplices intersect along a common (possibly empty) face and the collection of vertex sets of faces of $\|\Delta\|$ is $\Delta$. We say that $\Delta$ is a {\em simplicial  $(d-1)$-sphere} if $\|\Delta\|$ is homeomorphic to a $(d-1)$-sphere; we also say that $\Delta$ is a simplicial $d$-ball if $\|\Delta\|$ is homeomorphic to a $d$-ball. For instance, $\overline{V}$ is a simplicial ball, while $\partial\overline{V}$ as well as the boundary complex of any simplicial polytope is a simplicial sphere. 
	
	Many balls and spheres we discuss in this paper are {\em PL balls} and {\em PL spheres}. A PL $d$-ball is a simplicial complex PL homeomorphic to a (geometric) $d$-simplex. Similarly, a PL $ (d-1)$-sphere is a simplicial complex PL homeomorphic to  the boundary complex of a $d$-simplex.
	
	A simplicial $(d-1)$-sphere $\Delta$ is called $k$-neighborly if every $k$ vertices of $\Delta$ form the vertex set of a face. The boundary complex of a $d$-simplex is $d$-neighborly. On the other hand, it is not hard to see (for instance, from Dehn-Sommerville relations \cite{Klee64,Sommerville}) that a simplicial $(d-1)$-sphere with at least $d+2$ vertices cannot be more than $\lfloor d/2\rfloor$-neighborly. Furthermore,  $\lfloor d/2\rfloor$-neighborly simplicial $(d-1)$-spheres with arbitrarily many vertices do exist. One famous example is given by the boundary complex of the cyclic polytope whose construction is outlined below. 
	
	The {\em moment curve} $M=M_d: \R\to \R^d$ is defined by $M(t)\coloneqq(t,t^2,\ldots,t^d)$. Let $t_1<t_2<\cdots<t_n$ be any $n\geq d+1$ distinct real numbers. The  {\em cyclic polytope}, $C(d,n)$ is the convex hull of the points $M(t_1),\ldots, M(t_n)$. The cyclic polytope has several amazing properties \cite{Gale63, Ziegler}: it is a simplicial $d$-polytope with $n$ vertices; its combinatorial type is independent of the choice of $t_1,\ldots,t_n$; and it is $\lfloor d/2\rfloor$-neighborly. That the combinatorial type of $C(d,n)$ is independent of the choice of $t_1,\ldots,t_n$ is a consequence of a beautiful result by Gale \cite{Gale63} that provides a complete characterization of the facets of $C(d,n)$; it is known as the {\em Gale evenness condition}. 
	
	The interest in $\lfloor d/2\rfloor$-neighborly simplicial $(d-1)$-spheres comes in part from the celebrated Upper Bound Theorem due to Stanley \cite{Stanley75}. It asserts that in the class of all simplicial $(d-1)$-spheres with $n$ vertices, any $\lfloor d/2\rfloor$-neighborly sphere (e.g., the boundary complex of the cyclic polytope) simultaneously maximizes all the face numbers.
		
	Another polytope we will encounter in this paper is the {\em cross-polytope} $\C^*_d$ defined as the convex hull of $\{\pm e_1,\pm e_2,\ldots,\pm e_d\}$, where $e_1,\ldots,e_d$ are the endpoints of the standard basis in $\R^d$. As an abstract simplicial complex, the boundary complex $\partial \C^*_d$ of $\C^*_d$ is the collection of all subsets of $\{\pm 1,\ldots,\pm d\}$ that contain at most one vertex from each pair $\{\pm i\}$.
	
	\section{How many simplicial spheres are there?}
    
	Denote by $s(d,n)$ the number of simplicial $(d-1)$-spheres with $n$ {\em labeled} vertices. How large is $s(d, n)$? 
		
	We first discuss the case of $d=3$. It is well known that $s(3,n)=2^{{\Theta(n\log n)}}$. In fact, it is even known that the number of {\em combinatorial types} of simplicial $2$-spheres with $n$ vertices is asymptotically exponential in $n$. This follows from works of Tutte \cite{Tutte62, Tutte80}, Brown \cite{Brown}, and Richmond and Wormland \cite{RichmWorm82} along with Steinitz' theorem \cite[Chapter 4]{Ziegler} which implies that all simplicial (and even polyhedral) $2$-spheres are boundary complexes of $3$-polytopes.

	Assume now that $d\geq 4$.  The currently best known bounds are:
	\begin{theorem} \label{thm:s(d,n)}
		For $d\geq 4$, 
		\[
		2^{\Omega(n^{\lfloor d/2\rfloor})} \leq s(d,n) \leq 2^{O(n^{\lfloor d/2\rfloor}\log n)}.
		\]
	\end{theorem}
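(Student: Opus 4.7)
The plan has two parts. For the upper bound I would combine the Upper Bound Theorem with crude counting. Any simplicial $(d-1)$-sphere on $n$ labeled vertices is determined by its set of $d$-element facets, and by the Upper Bound Theorem \cite{Stanley75} the number of facets is at most $M := f_{d-1}(C(d,n)) = O(n^{\lfloor d/2 \rfloor})$. Hence
\[
s(d,n) \;\leq\; \sum_{k=0}^{M} \binom{\binom{n}{d}}{k} \;\leq\; (M{+}1)\binom{n}{d}^{M} \;=\; 2^{O(n^{\lfloor d/2 \rfloor}\log n)}.
\]

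For the lower bound, the plan is to invoke Kalai's construction of \emph{squeezed spheres} (in the refined form needed to reach $n^{\lfloor d/2 \rfloor}$ in the exponent). The construction associates, to each shifted family $\mathcal{F}$ of $\lfloor d/2 \rfloor$-subsets of $[n]$, a simplicial $d$-ball $B(\mathcal{F})$ whose facets are obtained from $\mathcal{F}$ via Gale's evenness condition applied to the moment curve. Verifying that $B(\mathcal{F})$ is a ball reduces to exhibiting an explicit lexicographic shelling of its facets; the shelling condition at each step follows from the shiftedness of $\mathcal{F}$. The boundary $\partial B(\mathcal{F})$ is then a simplicial $(d-1)$-sphere on at most $n$ vertices, and additional vertices can be introduced by coning to reach exactly $n$ vertices, at the cost only of a constant factor in the final count.

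Two tasks remain: to exhibit enough shifted families $\mathcal{F}$, and to show that distinct $\mathcal{F}$ produce non-isomorphic spheres $\partial B(\mathcal{F})$. The first is purely combinatorial and yields at least $2^{\Omega(n^{\lfloor d/2 \rfloor})}$ families. The second, which I expect to be the main obstacle, is the injectivity step: one must recover $\mathcal{F}$ (up to combinatorial equivalence) from $\partial B(\mathcal{F})$, typically by reading off suitable invariants such as $f$-vectors of vertex links together with the distinguished role of certain vertices sitting on the moment curve. Once injectivity is established, the two counts combine to give $s(d,n) \geq 2^{\Omega(n^{\lfloor d/2 \rfloor})}$, completing the proof.
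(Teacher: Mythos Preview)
Your upper bound argument is fine and matches the paper's approach (the paper simply cites this as following from Stanley's Upper Bound Theorem, exactly along the lines you sketch).

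The lower bound, however, has a genuine gap: Kalai's squeezed-sphere construction alone does \emph{not} give the exponent $\lfloor d/2\rfloor$ for all $d\geq 4$. The squeezed balls live inside $\partial C(2k,n)$ and are indexed by antichains in a poset of $k$-tuples; the number of such antichains is $2^{\Theta(n^{k-1})}$, and the resulting boundaries are $(2k-2)$-spheres. Thus for odd $d=2k-1$ one indeed gets exponent $k-1=\lfloor d/2\rfloor$, but for even $d=2k$ the squeezed construction (and anything obtained from it by coning/suspending) only yields exponent $\lfloor (d-1)/2\rfloor = d/2-1$, which is one short of the claimed bound. The paper is explicit about this: the even-$d$ lower bound is not Kalai's; it is due to Nevo, Santos, and Wilson, via a genuinely different construction (retriangulations of a join of paths inside a neighborly sphere). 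Your ``refined form'' of Kalai's construction does not exist in the literature, and the counting claim that shifted families of $\lfloor d/2\rfloor$-subsets of $[n]$ number $2^{\Omega(n^{\lfloor d/2\rfloor})}$ is off by one in the exponent: shifted families of $k$-subsets correspond to order ideals in a poset of width $\Theta(n^{k-1})$, so there are only $2^{\Theta(n^{k-1})}$ of them.

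Two smaller remarks. First, since $s(d,n)$ counts \emph{labeled} spheres, you do not need injectivity up to isomorphism; it suffices that distinct antichains yield distinct simplicial complexes, which is immediate once one knows that distinct squeezed balls have distinct boundaries (Kalai proves this). Second, your suggestion to ``introduce additional vertices by coning'' would raise the dimension, not the vertex count within a fixed dimension; but this is moot since the squeezed spheres already have vertex set $[n]$ for suitable antichains.
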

	
	\noindent For comparison, we should mention that there are much fewer polytopes. By works of Goodman and Pollack \cite{GoodmanPollack} and Alon \cite{Alon-86}, for $d\geq 4$, the number of $d$-polytopes (simplicial and non-simplicial) with $n$ labeled vertices is $2^{\Theta(n\log n)}$. The number of polytopes with unlabeled vertices satisfies the same asymptotics.
	
	Let's take a closer look at Theorem \ref{thm:s(d,n)}.\footnote{Note that although we count the number of spheres with {\em labeled} vertices (i.e., we count complexes up to equality), since $n! = 2^{O(n\log n)}$, the bounds of Theorem \ref{thm:s(d,n)} also apply to the number of combinatorial types of simplicial $(d-1)$-spheres on $n$ vertices, i.e., to the number of spheres counted up to isomorphism.} The upper bound on $s(d,n)$ essentially follows from Stanley's Upper Bound Theorem, see \cite[Section 4.2]{Kal}. The lower bound for odd values of $d$ is due to Kalai \cite{Kal}. In fact, Kalai proved that for {\em all} $d\geq 5$ (even and odd), $s(d,n)\geq 2^{\Omega(n^{\lfloor \frac{d-1}{2}\rfloor})}$. For $d=4$, Pfeifle and Ziegler \cite{PfeiZieg} showed that $s(4,n)\geq 2^{\Omega(n^{5/4})}$. The lower bound of Theorem \ref{thm:s(d,n)} for even values of $d$ is due to Nevo, Santos, and Wilson \cite{NeSanWil}. As the reader will notice, the exponents in the lower and upper bounds differ by the factor of $\log n$. This brings us to 
	
	\begin{problem} \label{prob:s(d,n)}
		Find tighter bounds on $\log s(n,d)$. Is $\log s(d,n)=\Theta(n^{\lfloor d/2\rfloor})$ or is it closer to $\Theta(n^{\lfloor d/2\rfloor}\log n))$?
	\end{problem}
	
	The lower bounds in Theorem \ref{thm:s(d,n)} are obtained by ingenious constructions. Kalai's construction of many $(2k-2)$-spheres \cite{Kal}  (for $k\geq 3$) is based on squeezed balls --- certain full-dimensional subcomplexes of the boundary complex of the cyclic polytope $C(2k,n)$. The boundary complexes of squeezed balls, called {\em squeezed spheres}, provide us with $2^{\Omega(n^{k-1})}$ simplicial $(2k-2)$-spheres with $n$ vertices. One of constructions by Nevo, Santos, and Wilson  \cite{NeSanWil} (at least in the case of $d=4$) is also based on the cyclic polytope. Furthermore, all the constructions in \cite{Kal, NeSanWil, PfeiZieg} are PL spheres, and, as was proved by Lee \cite{Lee00}, all of Kalai's squeezed spheres are even shellable. Since there are many non-shellable (and even non-constructible) simplicial spheres \cite{HachimoriZiegler}, there seems to be large unexplored terrain for potential additional constructions. One may even speculate that the value of $s(d,n)$ is closer to the upper bound in Problem \ref{prob:s(d,n)}. To this end, it is worth noting that there are constructions of simplicial $(d-1)$-spheres, $d\geq 6$, that are {\em not PL} and have only $d+12$ vertices \cite{BjornerLutz}.
	
	A related problem, where much less is known, is 
	
	\begin{problem} \label{prob:spheres-facets}
		For $d\geq 4$, how many combinatorial types of simplicial $(d-1)$-spheres with $N$ facets are there? Is the number of such spheres at most exponential in $N$, that is, is it bounded from above by $(C_d)^N$, where $C_d$ is some constant that depends only on $d$?
	\end{problem}
	
	This problem, for $d=4$, was raised by Ambj\o{}rn, Durhuus, and Jonsson \cite{ADJ91} and advertised by Gromov \cite[pp.~156--157]{Gromov2010}. For $2$-spheres, the answer to the second question of Problem \ref{prob:spheres-facets} is indeed affirmative, see \cite{Tutte62, Tutte80}.  For $d\geq 4$, Benedetti and Ziegler \cite{BZ} proved that the number of shellable (and, in fact, even the number of locally constructible) $(d-1)$-spheres with $N$ facets is at most exponential in $N$. Some additional results on exponential growth were recently established by Adiprasito and Benedetti \cite{ABcheeger}. However, the general case of Problem \ref{prob:spheres-facets} remains wide open. It is also worth mentioning  that if $\Delta$ is a simplicial $(d-1)$-sphere with $n$ vertices, then  $f_{d-1}(\Delta) \leq O(n^{\lfloor d/2\rfloor})$. (This is a simple consequence of the Upper Bound Theorem \cite{Stanley75}.) Thus, an affirmative answer to Problem  \ref{prob:spheres-facets} would imply that 
	\[s(d,n) \leq n! \cdot \sum_{N=1}^{O(n^{\lfloor d/2\rfloor})} (C_d)^N =
	2^{O(n^{\lfloor d/2\rfloor})}, \quad \mbox{and hence, that } s(d,n)=2^{\Theta(n^{\lfloor d/2\rfloor})}.\]

	\section{How many neighborly simplicial spheres are there?}  \label{section:neighb-spheres}
	Since $\lfloor d/2\rfloor$-neighborly simplicial $d$-polytopes and $(d-1)$-spheres serve as maximizers in the Upper Bound Theorem \cite{McMullen70,Stanley75}, one might be tempted to think that the property of being $\lfloor d/2\rfloor$-neighborly is quite rare. This, however, is not the case: Shemer \cite{Shemer} introduced a sewing construction and used it to produce many neighborly polytopes. The combinatorial part of the sewing construction can be roughly described as follows: one starts with a neighborly PL sphere $\Delta$ with $n$ vertices, finds in $\Delta$ a full-dimensional PL ball $B$ that satisfies certain nice properties, and then produces a new neighborly sphere $\Delta'$ with $n+1$ vertices  by replacing the ball $B$ with another ball --- the cone over the boundary of $B$, where the cone vertex is a new vertex.\footnote{Since Shemer's goal was to produce neighborly {\em polytopes} rather than spheres, he also needed to ensure that this ball $B$ can be chosen in a way that allows to place the new vertex beyond all facets in $B$ and beneath the rest of facets.}

	Padrol \cite{Padrol-13} extended and generalized Shemer's technique to produce even more neighborly polytopes: he constructed on the order of $n^{dn/2}$ polytopes of dimension $d$ with $n$ (labeled) vertices that are $\lfloor d/2\rfloor$-neighborly. Until very recently, this lower bound was the best lower bound not only on the number of neighborly polytopes, but also on the total number of $d$-polytopes with $n$ (labeled) vertices. The current best lower bound on the total number of polytopes is on the order of $n^{dn}$, see \cite{PPS}.
	
	Let $\sn(d,n)$ be the number of $\lfloor d/2\rfloor$-neighborly simplicial $(d-1)$-spheres with $n$ {\em labeled} vertices.	Motivated by Shemer's results, Kalai \cite{Kal} proposed the following bold conjecture:
	\begin{conjecture} \label{conj:many-neighb-spheres}
		For all $d\geq 4$, $\lim_{n\to\infty} (\log \sn(d,n)/ \log s(d,n))=1$.
	\end{conjecture}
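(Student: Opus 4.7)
The plan is to prove the conjecture by establishing $\log \sn(d,n) \geq (1-o(1))\log s(d,n)$; the reverse inequality is automatic since every neighborly sphere is a sphere. Given the current bounds $2^{\Omega(n^{\lfloor d/2\rfloor})} \leq s(d,n) \leq 2^{O(n^{\lfloor d/2\rfloor}\log n)}$ of Theorem~\ref{thm:s(d,n)}, a complete solution will either produce $2^{\Omega(n^{\lfloor d/2\rfloor}\log n)}$ neighborly spheres on $n$ vertices (closing the $\log n$ gap in Theorem~\ref{thm:s(d,n)} in passing), or combine $2^{\Omega(n^{\lfloor d/2\rfloor})}$ neighborly spheres with a tightened upper bound $\log s(d,n) = O(n^{\lfloor d/2\rfloor})$ along the lines of Problem~\ref{prob:s(d,n)}.

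I would attack the lower-bound side first. The natural warm-up is to upgrade Shemer's sewing from polytopes to PL spheres: in the sphere setting there is no polytopality (beyond/beneath) obstruction, and one sewing step is the purely combinatorial replacement of a full-dimensional admissible subball $B\subseteq\Delta$ by the cone over $\partial B$ with a new apex. Using Gale-evenness-type data inside the boundary complex of $C(d,n)$, one can parameterize admissible balls in $\Delta$ and try to show that a generic neighborly PL $(d-1)$-sphere admits polynomially many sewing choices per step, with exponent as close to $\lfloor d/2\rfloor - 1$ as possible. Cumulating such single-vertex moves, however, reproduces only the $n^{\Theta(n)}=2^{\Theta(n\log n)}$ count already achieved by Padrol, which is far short of the target for $d\geq 5$.

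The decisive step is therefore to find local moves that contribute \emph{exponentially} in the number of new vertices they add, rather than polynomially. One possibility is a ``multi-sewing'': replace an admissible ball $B$ by any neighborly PL ball $B'$ with the same boundary as $B$ but arbitrarily many interior vertices, and count via a recursion on $d$ and $n$. A second is to exhibit many mutually compatible single-vertex sewings — perhaps interior-disjoint, perhaps only forming an antichain in a suitable poset of admissible moves — so that subsets of a compatible family of size $t$ already yield $2^t$ distinct neighborly spheres, which can be distinguished by the links of the newly introduced vertices. Either way, one needs a structural theorem asserting that the number of ``independent'' moves in a generic neighborly sphere grows like $n^{\lfloor d/2\rfloor-1}$ or $n^{\lfloor d/2\rfloor}$.

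The principal obstacle is precisely the tension between neighborliness and independence: neighborliness forces every $\lfloor d/2\rfloor$-subset of vertices to span a face, so admissible balls overlap heavily and it is hard to certify that many of them support simultaneous, commuting sewings. A successful attack will likely require a new class of local moves specifically designed for neighborly PL spheres — perhaps inspired by the constructions of Nevo--Santos--Wilson in even dimensions, or by the non-PL flexibility exploited in \cite{BjornerLutz} — whose commutativity structure is intrinsic rather than geometric. Identifying and analyzing such a class is where the essential difficulty lies, and in my view progress on it would constitute the real breakthrough behind Kalai's conjecture.
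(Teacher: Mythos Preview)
This statement is Kalai's Conjecture~\ref{conj:many-neighb-spheres}, and the paper does \emph{not} prove it: the paper explicitly says ``Conjecture~\ref{conj:many-neighb-spheres} is likely out of reach at the moment'' and only records the current best lower bound $\sn(d,n)\geq 2^{\Omega(n^{\lfloor (d-1)/2\rfloor})}$ due to Novik and Zheng, which is a full power of $n$ below what would be needed. So there is no paper proof to compare against.

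Your submission is not a proof either, and you are candid about this: it is a research plan that ends by naming ``the principal obstacle'' and saying that identifying the right class of local moves ``would constitute the real breakthrough.'' That is an honest assessment of the state of the problem, but it means there is a genuine gap in the sense that no argument is actually given. Concretely, none of the proposed mechanisms---iterated single-vertex sewing, multi-sewing by neighborly balls with the same boundary, or large antichains of compatible sewings---is shown to produce even $2^{\Omega(n^{\lfloor d/2\rfloor})}$ neighborly spheres, let alone to match $\log s(d,n)$ up to $1+o(1)$. The alternative route you mention (tightening the upper bound on $s(d,n)$ to $2^{O(n^{\lfloor d/2\rfloor})}$) is itself Problem~\ref{prob:s(d,n)} and, as the paper notes, would follow from an affirmative answer to Problem~\ref{prob:spheres-facets}, also open. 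In short: the conjecture remains open, the paper presents it as such, and your proposal is a reasonable outline of possible attacks rather than a proof.
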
 
	
	The current best lower bound on $\sn(d,n)$ is due to Novik and Zheng \cite{NZ-neighborly} who proved that for $d\geq 5$,
	\[
	\sn(d,n) \geq 2^{\Omega(n^{\lfloor (d-1)/2\rfloor})}.
	\]
	The construction of \cite{NZ-neighborly} relies on Kalai's squeezed balls \cite{Kal}. More precisely, it relies on  {\em differences} of appropriately chosen squeezed balls, called {\em relative squeezed balls}, see \cite[Theorem 3.1(1)]{NZ-neighborly} and Section 6.1 below for precise conditions. Such balls are $(2k-1)$-dimensional subcomplexes of  $\partial C(2k,n)$ --- the boundary complex of $C(2k,n)$. The desired $(k-1)$-neighborly $(2k-2)$-spheres are obtained as the boundaries of relative squeezed balls while the desired $k$-neighborly $(2k-1)$-spheres are obtained by the sewing construction where a relative squeezed ball $B$ is deleted from  $\partial C(2k,n)$ and replaced by the cone over the boundary of $B$.
	
	While Conjecture \ref{conj:many-neighb-spheres} is likely out of reach at the moment, in view of the result of Nevo, Santos, and Wilson  \cite{NeSanWil}, the following problem might be more accessible:
	
	\begin{problem}
	Let $d\geq 4$. Are there $2^{\Omega(n^{\lfloor d/2\rfloor})}$ simplicial $(d-1)$-spheres on $n$ vertices that are $\lfloor d/2\rfloor$-neighborly? 
	\end{problem}
	
	\section{How many centrally symmetric spheres are there?}  \label{section:cs-spheres}
	A polytope $P\subset \R^d$ is {\em centrally symmetric} (or {\em cs} for short) if $P=-P$. For instance, the cross-polytope is cs. Similarly, a simplicial sphere $\Delta$ is {\em centrally symmetric} (or {\em cs}) if the vertex set of $\Delta$ is endowed with a {\em free involution} $\alpha$ that induces a free involution on the set of all nonempty faces of $\Delta$.  In more detail, for every nonempty face $F\in \Delta$, the following holds:
	\[ \alpha(F)\in \Delta, \quad \alpha(F)\neq F, \quad \mbox{and } \alpha(\alpha(F))=F.
	\]
	We call vertices $v$ and $\alpha(v)$ {\em antipodal}; for brevity we write $-v$ instead of $\alpha( v)$.
	
	Note that if $\Delta$ is cs, then $v$ and $-v$ never form an edge (as otherwise such an edge would be fixed by the involution). This leads to an adjusted notion of neighborliness for cs simplicial spheres: a cs simplicial sphere $\Delta$ is called {\em cs-$k$-neighborly} if every subset of $k$ vertices of $\Delta$ that does not contain a pair of antipodal vertices is the vertex set of a face.
	
	While cs-neighborliness of cs polytopes is very restricted (for example, a cs $d$-polytope with $2^d$ or more vertices cannot be even cs-$2$-neighborly \cite{LinNov, Novik-cs2neighb}), cs $(d-1)$-spheres that are cs-$\lfloor d/2\rfloor$-neighborly and have an arbitrary large number of vertices do exist. The following result was proved by Jockusch \cite{Jockusch95} for $d=4$ and by Novik and Zheng \cite{NZ-cs-neighborly} for all $d$:
	
	\begin{theorem}  \label{thm:cs-neighb}
		For all values of $d\geq 4$ and $n\geq d$, there exists a cs PL $(d-1)$-sphere with $2n$ vertices, $\Delta^{d-1}_n$, that is cs-$\lfloor d/2\rfloor$-neighborly.
	\end{theorem}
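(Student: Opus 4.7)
The plan is to prove the theorem by induction on $n \geq d$. For the base case $n = d$, take $\Delta^{d-1}_d := \partial \C^*_d$, the boundary of the $d$-dimensional cross-polytope. This is a cs PL $(d-1)$-sphere with $2d$ vertices, and since its facets are precisely the transversals of the antipodal pairs $\{\pm e_i\}$, every subset of the vertex set not containing an antipodal pair is a face. Thus $\partial \C^*_d$ is cs-$d$-neighborly, hence in particular cs-$\lfloor d/2\rfloor$-neighborly.

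For the inductive step, given a cs-$\lfloor d/2\rfloor$-neighborly cs PL $(d-1)$-sphere $\Delta := \Delta^{d-1}_n$, I would produce $\Delta^{d-1}_{n+1}$ via a centrally symmetric analog of Shemer's sewing. Concretely, the plan is to locate an antipodal pair of full-dimensional PL balls $B, -B \subset \Delta$ whose vertex sets contain no antipodal pair and such that $B$ satisfies the following universal property: every subset $F \subseteq V(\Delta)$ of size at most $\lfloor d/2\rfloor - 1$ that contains no antipodal pair and is disjoint from the vertices of $-B$ lies on $\partial B$. Once such $B$ is chosen, introduce a new antipodal pair $\{v, -v\}$ and form $\Delta^{d-1}_{n+1}$ from $\Delta$ by replacing $B$ with the cone $v * \partial B$ and $-B$ with the cone $(-v) * \partial(-B)$. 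Standard PL topology then certifies that the result is a cs PL $(d-1)$-sphere on $2(n+1)$ vertices.

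To check cs-$\lfloor d/2\rfloor$-neighborliness of $\Delta^{d-1}_{n+1}$, I would take any subset $S$ of the new vertex set of size $\lfloor d/2\rfloor$ containing no antipodal pair and split according to $S \cap \{v,-v\}$. When $S \cap \{v,-v\} = \emptyset$, the set $S$ is a face of $\Delta$ by the inductive hypothesis, and it survives in $\Delta^{d-1}_{n+1}$ provided no face of $\Delta$ of dimension at most $\lfloor d/2\rfloor -1$ lies in the interior of $B$ or of $-B$; this is forced by the universal property applied to $B$ (the symmetric statement for $-B$ follows by the involution). When $S$ contains exactly one of $v,-v$, say $S = F\cup\{v\}$, then $F$ is a non-antipodal subset of $V(\Delta)$ of size $\lfloor d/2\rfloor - 1$, and $S$ is a face of $\Delta^{d-1}_{n+1}$ precisely when $F\in\partial B$, which is exactly the universal property; the case containing $-v$ is symmetric.

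The crucial step---and the main obstacle---is the cs construction of a universal ball $B$ satisfying the skeleton condition $\skel_{\lfloor d/2\rfloor -1}(\Delta)\cap B \subseteq \partial B$ together with the requirement that its antipode $-B$ is vertex-disjoint from it. The plan here is to adapt the relative squeezed ball machinery of Novik and Zheng described in Section~\ref{section:neighb-spheres} (built on Kalai's squeezed balls inside $\partial C(2k,n)$) to the centrally symmetric setting: at each inductive stage, select a carefully designed family $\mathcal{F}$ of non-antipodal low-dimensional faces near a chosen apex vertex, let $B$ be the union of the closed stars of the faces in $\mathcal{F}$, and take $-B$ to be its image under the involution. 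One must then verify that $B$ and $-B$ are vertex-disjoint PL $(d-1)$-balls, that every required $(\lfloor d/2\rfloor -1)$-face appears on $\partial B$, and that the replacement preserves the PL sphere structure. Once these checks are carried out at a single stage, iterating produces the desired family $\Delta^{d-1}_n$ for all $n \geq d$.
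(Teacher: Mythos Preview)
Your high-level outline---induction on $n$ starting from $\partial\C^*_d$, followed by a cs version of sewing that replaces an antipodal pair of balls $B,-B$ by cones over their boundaries---matches the paper's sketch after Theorem~\ref{thm:cs-neighb}. However, two of your hypotheses on $B$ are too strong, and one of them is fatal.

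\textbf{The vertex-disjointness assumption kills cs-neighborliness.} You require that the vertex set of $B$ contain no antipodal pair, so that $B$ and $-B$ are vertex-disjoint. But then the new vertex $v$ is joined only to faces of $\partial B$, hence only to subsets of $V(B)$. If $w\in V(B)$, then $-w\notin V(B)$, so $\{-w\}\notin\partial B$ and therefore $\{v,-w\}$ is \emph{not} a face of $\Delta^{d-1}_{n+1}$. This already breaks cs-$2$-neighborliness. Your ``universal property'' (that every non-antipodal $F$ of size $\le\lfloor d/2\rfloor-1$ disjoint from $V(-B)$ lies on $\partial B$) is correspondingly too weak for the case $S=F\cup\{v\}$: the set $F$ need only avoid $-v$, not all of $V(-B)$, so many required $F$ are simply not covered. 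The paper avoids this by only demanding that $B$ and $-B$ share no common \emph{facets}; the balls $B^{d,i}_n$ in Definition~\ref{main-constr} necessarily contain both positive and negative vertices, and in fact $\partial B$ must contain the full cs-$(\lfloor d/2\rfloor-1)$-neighborly skeleton of $\Delta$.

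\textbf{The construction of $B$ is not via squeezed balls.} Your plan to ``adapt the relative squeezed ball machinery \ldots\ to the centrally symmetric setting'' does not match what actually happens, and it is far from clear it can be made to work: there is no cs analog of $\partial C(2k,n)$ sitting inside $\Delta^{d-1}_n$ to carve squeezed balls from. The paper's construction (Definition~\ref{main-constr}) is instead an interlaced double recursion on both the dimension $d$ and the vertex count $n$, producing an entire family of nested balls $B^{d,i}_n$ for $0\le i\le\lceil d/2\rceil$ via
\[
B^{d,i}_n=\bigl(B^{d-1,i}_{n-1}*n\bigr)\cup\bigl((-B^{d-1,i-1}_{n-1})*(-n)\bigr),
\]
with $B^{2k-1,k}_n=\Delta^{2k-1}_n\setminus B^{2k-1,k-1}_n$. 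Verifying that these are PL balls, that $\pm B^{d,\lceil d/2\rceil-1}_n$ share no facets, and that sewing along them preserves cs-$\lfloor d/2\rfloor$-neighborliness is the substance of \cite{NZ-cs-neighborly}; none of it reduces to the (non-cs) squeezed-ball picture.
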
 
	
	The construction uses the following cs analog of combinatorial sewing. It starts with the boundary complex of the cross-polytope: $\Delta^{d-1}_{d}\coloneqq\partial \C^*_d$. At each step of the construction, it is shown that there exists a pair of antipodal $(d-1)$-balls $\pm B$ in $\Delta^{d-1}_{n-1}$ that share no common facets and have other nice properties. The complex $\Delta^{d-1}_{n}$ is then obtained from $\Delta^{d-1}_{n-1}$ by introducing two new vertices, $n$ and $-n$,  and replacing $B$ and $-B$ with the cones over the boundary complexes of $B$ and $-B$: $\partial B * n$ and $\partial(-B)*(-n)$, respectively; see Definition \ref{main-constr} below for more details.

	The significance of Theorem \ref{thm:cs-neighb} is that exactly as in the case of spheres without symmetry assumption, in the class of cs simplicial $(d-1)$-spheres with $2n$ vertices, a cs-$\lfloor d/2\rfloor$-neighborly sphere simultaneously maximizes all the face numbers. This follows from work of Adin \cite{Adin93} and Stanley (unpublished). Also, as in the case without symmetry assumption, it is very natural to consider the following problems:
	
	\begin{problem} \label{prob:cs(d,n)}
		Let $d\geq 4$. Denote by $\cs(d, n)$ the number of cs simplicial $(d-1)$-spheres with $2n$ labeled vertices. How large is $\cs(d, n)$?
	\end{problem}

	\begin{problem} \label{prob:ncs(d,n)}
		Let $d\geq 4$. How many cs simplicial $(d-1)$-spheres with $2n$ vertices are cs-$\lfloor d/2\rfloor$-neighborly? Can one construct  $2^{\Omega(n^{\lfloor (d-1)/2\rfloor})}$ such spheres, or, more optimistically, $2^{\Omega(n^{\lfloor d/2\rfloor})}$ such spheres?
	\end{problem}
	
	An additional impetus for Problem \ref{prob:ncs(d,n)} comes from a result of McMullen--Shephard \cite{McMShep} asserting that for $d\geq 4$, a cs $(d-1)$ sphere that is cs-$\lfloor d/2\rfloor$-neighborly and has more than $2d+2$ vertices cannot be realized as the boundary of cs $d$-polytope. In contrast with the situation for spheres without symmetry assumption, not much is known about this problem: for $d=4$, there are at least $\Omega(2^n)$ pairwise {\em non-isomorphic} cs spheres with $2n$ vertices that are cs-$2$-neighborly, see \cite[Section 7]{NZ-cs-neighborly-new}. For $d=2k>4$ and $n\gg 0$, only two non-isomorphic constructions are available at present (see  \cite[Thms.~5.5 and 5.7]{NZ-cs-neighborly-new}). One of these two spheres is $\Delta^{2k-1}_n$. The other one, which we denote by $\Lambda^{2k-1}_n$, is the link of one of the edges in $\Delta^{2k+1}_{n+2}$. 

As for Problem \ref{prob:cs(d,n)}, we claim that $\cs(d, n)$ satisfies the inequalities of Theorem \ref{thm:s(d,n)}, that is, 
    $$2^{\Omega(n^{\lfloor d/2\rfloor})} \leq \cs(d,n) \leq 2^{O(n^{\lfloor d/2\rfloor}\log n)}.$$
    
    We provide a sketch of the proof. The upper bound follows from the fact that $\cs(d, n)\leq s(d, n)$. It remains to prove the lower bound for $d=2k$. (For $d=2k+1$, simply count suspensions of cs $(2k-1)$-spheres.) Assume that $n=mk$. By \cite[Proposition 4.3]{NZ-cs-neighborly-new}, a cs simplicial $(2k-1)$-sphere $\Lambda^{2k-1}_{2n}$ from the family of cs-$k$-neighborly spheres mentioned above satisfies the following property: it contains an isomorphic image of the ball that is the join of $k$ paths $(1,2,\dots,m)$, $(m+1,m+2,\dots,2m)$, $\dots$, and $((k-1)m+1,..., km)$. Denote this isomorphic image by $I$. By symmetry, $\Lambda^{2k-1}_{2n}$ also contains $-I$. We now apply Construction 3 from \cite{NeSanWil} to $I$, that is, we retriangulate $I$ in such a way that the resulting ball has $O(n)$ labeled vertices and its boundary is the same as $\partial I$. Let $\T$ be the set of new balls. As shown in \cite{NeSanWil}, the size of $\T$ is $2^{\Omega(n^{\lfloor d/2\rfloor})}$.

	Consider the prism $\partial I \times [0,1]$. Its facets are prisms over simplices. Triangulate this polyhedral complex without introducing new vertices. For each $B\in\T$, let $\Gamma(B)$ be this triangulation of $\partial I \times [0,1]$ with a copy of $B$ glued to the top, i.e., to $\partial I \times \{1\}=\partial B \times \{1\}$, along the boundary of $B$. Then $\Gamma(B)$ is a PL ball whose boundary is $\partial I$ (the bottom of the prism). The sphere obtained from $\Lambda^{2k-1}_{2n}$ by replacing $I$ with $\Gamma(B)$ and $-I$ with $-\Gamma(B)$ is then a cs simplicial $(2k-1)$-sphere with $O(n)$ vertices. In this way, we obtain $2^{\Omega(n^{\lfloor d/2\rfloor})}$ cs $(2k-1)$-spheres with $O(n)$ vertices.

	\section{The transversal numbers of simplicial spheres}
    Let $H=(V, E)$ be a hypergraph. A {\em transversal} of $H$ is a subset $T$ of $V$ that intersects all edges of $H$. The {\em transversal number} $\tau(H)$ of $H$ is the minimum cardinality of transversals of $H$. Transversal numbers of interesting families of graphs and hypergraphs have been studied extensively, see, for example, \cite{Alon90,Alonetall02}.

    In this section, we restrict our discussion to transversal numbers of hypergraphs that arise from simplicial spheres. Given a simplicial sphere $\Delta$, we define a hypergraph $H(\Delta)=(V, E)$, where $V$ is the vertex set of $\Delta$, and $E$ is the set of facets of $\Delta$. Then $\tau(H(\Delta))$ is the minimum number of vertices needed so that the union of their (closed) vertex stars is the entire complex $\Delta$, i.e., this union does not miss any facet of $\Delta$.
		In addition to the transversal number $\tau(H(\Delta))$, we are also interested in the {\em transversal ratio} $\mu(\Delta)\coloneqq\tau(H(\Delta))/f_0(\Delta)$. 
    \begin{problem}  \label{prob:mu_d}
    	Let $d\geq 4$. How does the function $$\mu_d(n)\coloneqq\max\{\mu(\Delta): \Delta\, \text{is a simplicial $(d-1)$-sphere with $n$ vertices}\}$$ behave when $n$ is large?
			Find the value of  $\mu_d\coloneqq\limsup_{n\to\infty} \mu_d(n).$
    \end{problem}
    
		 In \cite{BriggsDobbinsLee}, it is shown that $\mu_3=1/2$: $\mu_3\leq 1/2$ follows from the four color theorem and $\mu_3\geq 1/2$ follows from the existence of an infinite family of $2$-spheres with transversal ratio $1/2$. The problem remains wide open for $d\geq 4$. Several lower bounds on $\mu_d$ proved in \cite{BriggsDobbinsLee} can be summarized as follows: $\mu_4\geq 11/21$, while for $k\geq 3$, $\mu_{2k}\geq 1/2$ and $\mu_{2k-1}\geq 1/k$. The proof of the bound $\mu_4\geq 11/21$ begins with a construction of a $2$-neighborly PL $3$-sphere with $21$ vertices whose transversal number is $11$; it is found with computer help. The bound $\mu_{2k}\geq 1/2$ is obtained by considering the family of the boundary complexes of the cyclic polytopes $\{\partial C(2k,n) : n\geq 2k+1\}$. This approach does not work for even-dimensional spheres as the transversal number of $\partial C(2k-1,n)$ is equal to two independently of $n$. The bound $\mu_{2k-1}\geq 1/k$ is verified by looking at the family of the boundary complexes of stacked $(2k-1)$-polytopes. This discussion leads to the following weaker version of Problem \ref{prob:mu_d}:
		
		\begin{problem}
		For a fixed $d\geq 4$, is $\mu_d$ is bounded away from $1$? Is $\limsup_{d\to\infty} \mu_d$ strictly smaller than $1$ or is it equal to $1$?
		\end{problem}

    A simplicial sphere is {\em flag} if it is the clique complex of its graph. Under the metric that assigns the same length $\pi/2$ to all edges of the sphere, the flag spheres have particularly nice geometric properties; for example, the vertex stars are geodesically convex, see \cite{Gromov87}. This motivates our final problem.

    \begin{problem} \label{prob:flag}
	Let $d\geq 3$. Find the value of  $\lambda_d\coloneqq\limsup_{n\to\infty} \lambda_d(n)$, where $$\lambda_d(n)\coloneqq\max\{\mu(\Delta) : \Delta\, \text{is a flag $(d-1)$-sphere with $n$ vertices}\}.$$ Is $\lambda_d<\mu_d$?  
    \end{problem}
    
In the rest of the paper, we obtain some estimates on transversal ratios. Our main result is that for all odd $d\geq 5$, $\mu_d\geq 2/5$. It is curious to note that most of the large transversal ratios mentioned above are attained by neighborly spheres. This suggests we consider various families of neighborly spheres to find better lower bounds on $\mu_d$. To start, we notice that $\mu(\Delta)\geq 1/4$ for {\em any} 2-neighborly 3-sphere $\Delta$ with $n$ vertices: by the 2-neighborliness of $\Delta$ and the Dehn--Sommerville relations, $f_3(\Delta)=f_1(\Delta)-f_0(\Delta)=n(n-3)/2$. Now, the link of each vertex is a $2$-sphere with $n-1$ vertices. Thus, each vertex belongs to $2(n-1)-4=2(n-3)$ facets. It follows that $\mu(\Delta)\geq f_3(\Delta)/2(n-3)n=1/4.$ (In fact, by using the inclusion-exclusion principle, one can even improve this lower bound to $\frac{3}{2}-\sqrt{\frac{3}{2}}$; we invite the reader to check this fact.)

As promised, we now turn to proving that $\mu_d\geq 2/5$ for all odd $d\geq 5$. We achieve this by estimating the transversal ratios of two classes of highly neighborly spheres that we encountered in Sections \ref{section:neighb-spheres} and \ref{section:cs-spheres}, respectively: the family of relative squeezed spheres and the family of cs neighborly spheres $\{\Delta^{d-1}_n : n\geq d\}$.
    
    \subsection{The traversal numbers of relative squeezed spheres}
	We start by reviewing definitions of squeezed balls and relative squeezed balls; the reader is referred to \cite{Kal,NZ-neighborly} for more details.
		
    Let $k\geq 1$, and for $1\leq m \leq n$, let $[m,n]$ denote the set $\{m,m+1,\ldots,n\}$.
	The poset $\mathcal{F}_{2k}^{[m,n]}$ is defined as follows: as a set, it consists of the following $2k$-sets (which are in fact facets of $\partial C(2k, n)$): 
    $$\{\{i_1, i_1+1, i_2, i_2+1, \dots, i_k, i_k+1: m\leq i_1, \,i_k+1\leq n,\, \mbox{ and } i_j\leq i_{j+1}-2 \,\,\,  \forall 1\leq j\leq k-1\}\},$$ ordered by the standard partial order $\leq_p$ (the product order on $\N^{2k}$). Given an antichain $S$ in $\mathcal{F}_{2k}^{[1,n]}$, one can define $B(S)$ to be the pure simplicial complex whose facets are the elements of the order ideal of $\mathcal{F}_{2k}^{[1,n]}$ generated by $S$. It is a theorem of Kalai \cite{Kal} that for any antichain $S$, $B(S)$ is a PL $(2k-1)$-ball, called a {\em squeezed ball}; its boundary, $\partial B(S)$, is then a PL $(2k-2)$-sphere, called a {\em squeezed sphere}.
		
 Given an antichain $S$ in $\mathcal{F}_{2k}^{[1,n]}$, one can consider another (potentially empty) antichain
\[
S-\mathbf{1}_{2k}\coloneqq\{\{x_1-1, x_1, x_2-1, x_2, \dots, x_k-1, x_k\} : \{x_1, x_1+1, x_2, x_2+1, \dots, x_k, x_k+1\}\in S, \;x_1>1\}.
\]
Let $B_S$ denote the pure simplicial complex whose facets are all facets of $B(S)$ that are not facets of $B(S-\mathbf{1}_{2k})$. It is proved in \cite{NZ-neighborly} that for any antichain $S$, $B_S$ is a PL $(2k-1)$-ball, and hence, $\partial B_S$ is a PL $(2k-2)$-sphere. We call $B_S$ a {\em relative squeezed ball} and $\partial B_S$ a {\em relative squeezed sphere}. An important property of these complexes is that if $S$ contains the element $[1,2]\cup [n-2k+3,n]$, then $\partial B_S$ is a $(k-1)$-neighborly sphere with vertex set $[1,n]$, see \cite[Theorem 3.1]{NZ-neighborly}.
 
For the rest of this subsection we fix $k\geq 3$ and we let $A_n$ denote the following antichain in $\mathcal{F}_{2k}^{[1,n]}$; to simplify notation we write $A$ instead of $A_n$ when $n$ is fixed or understood from context:
    $$A=A_n\coloneqq\{F_1, F_2, \dots, F_{\lfloor \frac{n}{2}\rfloor -k+1}\}, \qquad \mbox{where }F_i=[i, i+1]\cup[n-2k+4-i, n-i+1].$$ In particular, $F_1=[1,2]\cup [n-2k+3,n]$, and so $\partial B_{A_n}$ is $(k-1)$-neighborly. Our goal is to estimate the transversal ratios of $\partial B_{A_n}$ as $n\to\infty$, see Proposition \ref{prop:mu-of-partialB_S}. To do so, it is useful to describe some of the facets of 
$\partial B_A=\partial B_{A_n}$ (for a fixed $n$). This is done in the following lemma.

    \begin{lemma}\label{lm: relative squeezed sphere facets}
    Let $k\geq 3$ and let $n\geq 2k+1$. The complex $\partial B_A$ contains the following facets: 
    	\begin{enumerate}
    		\item $[i, i+1]\cup H\cup \{n-i+1\}$, where $1\leq i\leq \lfloor \frac
    		{n}{2}\rfloor -k+1$ and $H\in \mathcal{F}_{2k-4}^{[i+2, n-i]}$. 
    		\item $[i, i+1]\cup H\cup \{n-i-1\}$, where $1\leq i\leq \lfloor \frac
    		{n}{2}\rfloor -k+1$ and $H\in \mathcal{F}_{2k-4}^{[i+2, n-i-2]}$.
    		\item $\{i+1\}\cup H\cup [n-i, n-i+1]$, where $1\leq i\leq \lfloor \frac
    		{n}{2}\rfloor -k+1$ and $H\in \mathcal{F}_{2k-4}^{[i+2, n-i-1]}$.
    		\item $\{i\}\cup H\cup [n-i-1, n-i]$, where $1\leq i\leq \lfloor \frac
    		{n}{2}\rfloor -k+1$ and $H\in \mathcal{F}_{2k-4}^{[i+2, n-i-2]}$.
    		\item $\{1\} \cup H \cup [n-1, n]$, where $H\in \mathcal{F}_{2k-4}^{[2, n-2]}$.
    		\item $\{\lfloor \frac{n}{2}\rfloor -k+2\}\cup H$, where $H\in \mathcal{F}_{2k-2}^{[\lfloor \frac{n}{2}\rfloor -k+3, \lceil \frac{n}{2}\rceil +k]}$.
    	\end{enumerate}
    \end{lemma}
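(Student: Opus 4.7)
The plan is to show that, for each of the six families listed, the specified $(2k-1)$-set $G$ is contained in exactly one facet of $B_A$. Since $B_A$ is a PL $(2k-1)$-ball by \cite[Theorem 3.1]{NZ-neighborly}, such a $G$ is automatically a facet of $\partial B_A$. For each $G$, the strategy has three steps: (a) enumerate the Gale elements $F \in \mathcal{F}_{2k}^{[1,n]}$ containing $G$; (b) identify which of these are facets of $B(A)$, that is, satisfy $F \leq_p F_j$ for some $F_j \in A$; and (c) discard those that also satisfy $F \leq_p F_j - \mathbf{1}_{2k}$ for some $j \geq 2$, since such $F$ are facets of $B(A - \mathbf{1}_{2k})$ and therefore not facets of $B_A$.

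For step (a), a $2k$-set satisfies Gale's evenness condition if and only if, in sorted order, it is a disjoint union of consecutive pairs of integers; equivalently, it decomposes into blocks of consecutive integers, each of even length. Since $|G| = 2k-1$ is odd, any $G$ admitting a Gale extension must have exactly one odd-length block, and a valid $v$ is one of the (at most two) integers adjacent to that block. In each family the distinguished singleton vertex ($n-i+1$, $n-i-1$, $i+1$, $i$, $1$, or $\lfloor n/2 \rfloor - k + 2$) identifies this odd block.

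Steps (b) and (c) reduce to componentwise comparisons of pair-starting coordinates. In each family, exactly one candidate $v$ produces a facet of $B_A$: for families (1) and (2) the correct choice is $v = n-i$; for family (3) it is $v = i$; for family (4) it is $v = i+1$; for family (5) the left neighbor $v = 0$ is unavailable, so only $v = 2$ remains; and for family (6) the surviving choice is $v = \lfloor n/2 \rfloor - k + 1$. The resulting facet is $\leq_p F_i$ (respectively $F_1$ or $F_{\lfloor n/2 \rfloor - k + 1}$), while the extremal coordinate constraints of $F_j - \mathbf{1}_{2k}$ prevent it from being dominated by any shifted antichain element. The alternative extension, when it exists, either falls outside $[1, n]$, fails the $\leq_p F_j$ test for every $j$, or lies in $B(A - \mathbf{1}_{2k})$, and is therefore excluded from $B_A$.

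The main obstacle is the volume of case-by-case coordinate bookkeeping across the six families. A useful unifying observation is that the shift $F_j \mapsto F_j - \mathbf{1}_{2k}$ decreases both the first and the last coordinate of $F_j$ by $1$; since the two candidate extensions of $G$ in each family differ in precisely one boundary coordinate, exactly one of them is killed by the $\leq_p F_{j'} - \mathbf{1}_{2k}$ test. Modulo this bookkeeping, all six cases follow the same template.
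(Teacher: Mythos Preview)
Your approach is essentially the same as the paper's: both show that each listed $(2k-1)$-set $G$ lies in a unique facet of $B_A$, and hence is a facet of $\partial B_A$. Your three-step template (find the at most two Gale extensions of $G$, check which lie in $B(A)$, discard those lying in $B(A-\mathbf{1}_{2k})$) is exactly what the paper carries out, although the paper organizes the bookkeeping slightly differently by first characterizing all facets of $B_A$ whose minimal pair is $[i,i+1]$.

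There is, however, a slip in the specific values of $v$ you record for families (1) and (5). You claim $v=n-i$ for family (1), but when the top pair of $H$ happens to be $\{n-i-1,n-i\}$ (which is permitted since $H\in\mathcal{F}_{2k-4}^{[i+2,n-i]}$), the element $n-i$ already belongs to $G$ and the odd block containing $n-i+1$ is longer than a singleton. In that case the left neighbor of the odd block is strictly smaller than $n-i$. The paper handles this correctly: the unique facet of $B_A$ containing $G$ is $G\cup\{m\}$ with $m=\max\big([i+2,n-i]\setminus H\big)$, which equals $n-i$ only when $n-i\notin H$. The same issue arises in family (5): if $\{2,3\}$ is the bottom pair of $H$, then $2\in G$ and the correct extension is $\min\big([2,n-2]\setminus H\big)$ rather than $v=2$. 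Your general framework already accommodates this (you correctly say the distinguished singleton merely ``identifies'' the odd block), so once you replace the fixed values $n-i$ and $2$ by the appropriate extremal element of the complement of $H$, the argument goes through. Families (2), (3), (4), (6) are unaffected, since there the odd block always abuts an element not in $H$, so your stated $v$ is correct.
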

    \begin{proof}
    	It suffices to show that every item in the list is contained in a unique facet of $B_A$. For $1\leq i\leq \lfloor \frac{n}{2}\rfloor -k+1$, the set $[i, i+1]\cup G$ is a facet of $B_A$ if and only if $$G\leq_p [n-2k+4-i, n-i+1], \quad G\nleq_p [n-2k+2-i, n-i-1];$$ that is, it is a facet of $B_A$ if and only if $G=H\cup [n-i, n-i+1]$, where $H\in \mathcal{F}_{2k-4}^{[i+2, n-i-1]}$, or $G=H\cup [n-i-1, n-i]$, where $H\in  \mathcal{F}_{2k-4}^{[i+2, n-i-2]}$. This shows that the the only facet of $B_A$ that contains the set $F=[i, i+1]\cup H\cup \{n-i+1\}$, where $H\in \mathcal{F}_{2k-4}^{[i+2, n-i]}$, is the facet $F\cup \{m\}$,  where $m$ is the maximum of $[i+2, n-i]\backslash H$. Similarly, the only facet of $B_A$ that contains the second item in the list is $[i, i+1]\cup H\cup [n-i-1, n-i]$.
    	
    	By shifting the indices, a facet of $B_A$ with minimal elements $\{i+1, i+2\}$ must have maximal elements $\{n-i-2, n-i-1\}$ or $\{n-i-1, n-i\}$. Similarly, a facet of $B_A$ with minimal elements $\{i-1, i\}$ must have maximal elements $\{n-i, n-i+1\}$ or $\{n-i+1, n-i+2\}$. This shows that the third and fourth items are also contained in unique facets of $B_A$, namely, in the facets $[i,i+1]\cup H\cup [n-i, n-i+1]$ and $[i,i+1]\cup H\cup [n-i-1, n-i]$, respectively. The fifth and sixth cases are similar: the only facet of $B_A$ that contains $F=\{1\} \cup H \cup [n-1, n]$, where $H\in \mathcal{F}_{2k-4}^{[2, n-2]}$, is $F\cup\{m\}$ where $m$ is the minimum of $[2,n-2]\backslash H$, while the only facet that contains $F'=\{\lfloor \frac{n}{2}\rfloor -k+2\}\cup H$, where $H\in \mathcal{F}_{2k-2}^{[\lfloor \frac{n}{2}\rfloor -k+3, \lceil \frac{n}{2}\rceil +k]}$, is $F'\cup \{\lfloor n/2\rfloor-k+1\}$.
    \end{proof}

    To establish Proposition \ref{prop:mu-of-partialB_S}, we need one additional lemma; its proof follows from the proof of \cite[Proposition 3.5]{BriggsDobbinsLee}.
    \begin{lemma}\label{lm: BDL prop}
    	Any transversal of $\mathcal{F}_{2k}^{[1, n]}$ has size at least $\lceil \frac{n}{2}\rceil -k+1$.
    \end{lemma}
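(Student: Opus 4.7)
The plan is to isolate inside $\mathcal{F}_{2k}^{[1,n]}$ a canonical antichain of pairwise disjoint facets that any transversal must pierce, and then to finish by a pigeonhole argument. First I would form the greedy pair partition of the ground set,
\[
P_j \;=\; \{2j-1,\,2j\}, \qquad j=1,\dots,\lfloor n/2\rfloor,
\]
leaving the element $n$ unpaired when $n$ is odd. The structural fact powering the proof is that the union of any $k$ of these pairs is itself an element of $\mathcal{F}_{2k}^{[1,n]}$: given $P_{j_1}\prec P_{j_2}\prec\cdots\prec P_{j_k}$ and setting $i_\ell:=2j_\ell-1$, the strict inequalities $j_{\ell+1}>j_\ell$ translate into $i_{\ell+1}-i_\ell\geq 2$, which is precisely the gap condition in the definition of $\mathcal{F}_{2k}^{[1,n]}$.

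Next I would argue by contradiction against a small transversal $T$. If $T$ missed $k$ of the pairs $P_j$, the union of those missed pairs would be a facet in $\mathcal{F}_{2k}^{[1,n]}$ disjoint from $T$, contradicting the transversal property. Hence $T$ meets at least $\lfloor n/2\rfloor-(k-1)$ of the pairs, and since each intersected pair contributes at least one element to $T$, one obtains
\[
|T|\;\geq\;\lfloor n/2\rfloor - k + 1.
\]
This already yields the claimed bound of $\lceil n/2\rceil-k+1$ whenever $n$ is even.

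The main obstacle is closing the single-unit gap in the odd case, where $\lceil n/2\rceil=\lfloor n/2\rfloor+1$ and the greedy-partition argument alone falls one element short. Here I would introduce a case-split on the unpaired endpoint $n$. If $n\in T$, then $T\cap[1,n-1]$ is a transversal of $\mathcal{F}_{2k}^{[1,n-1]}$ on an even-sized ground set (since every facet in $\mathcal{F}_{2k}^{[1,n-1]}$ is also a facet in $\mathcal{F}_{2k}^{[1,n]}$), and the case already settled gives $|T\cap[1,n-1]|\geq \lceil n/2\rceil-k$; adjoining $n$ produces the required bound. The genuinely delicate case $n\notin T$ is not resolved by the single greedy partition, and my plan would be to run the same pigeonhole against a shifted pair partition of $[1,n]$ (for instance $\{2,3\},\{4,5\},\dots$) and combine the two counts, exploiting the fact that the endpoint element can itself participate in the pair $\{n-1,n\}$; this joint counting step is the heart of the argument, following the approach of \cite[Proposition 3.5]{BriggsDobbinsLee}.
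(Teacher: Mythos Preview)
Your greedy pair-partition argument is correct and cleanly establishes $|T|\geq \lfloor n/2\rfloor - k + 1$; this pigeonhole on disjoint consecutive pairs is essentially the content of \cite[Proposition~3.5]{BriggsDobbinsLee}, which is all the paper invokes (no independent proof is given there). So for even $n$ your proof is complete and matches the cited source.

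The obstacle you encounter in the odd case with $n\notin T$ is not a defect of your strategy but of the stated bound: for odd $n$ the inequality with $\lceil n/2\rceil$ is in fact \emph{false}, and no shifted-partition trick can close the gap. Concretely, take $n=2k+1$: every element of $\mathcal{F}_{2k}^{[1,2k+1]}$ is the complement in $[1,2k+1]$ of a single odd integer, so $T=\{2\}$ is already a transversal of size $1$, while the lemma would require $\lceil(2k+1)/2\rceil-k+1=2$. More generally, for any odd $n\geq 2k+1$ the set $S=[1,2k-1]\cup\{2k+1,2k+3,\dots,n\}$ contains only $k-1$ disjoint consecutive pairs, so its complement is a transversal of size exactly $\lceil n/2\rceil-k$. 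The sharp lower bound is therefore $\lfloor n/2\rfloor-k+1$, which your argument already delivers; this off-by-one is immaterial to the paper's application in Proposition~\ref{prop:mu-of-partialB_S}, which only uses $|T'|\geq \tfrac{1}{2}(n-2i)-O(k)$.
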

	
    \begin{proposition} \label{prop:mu-of-partialB_S} Let $k\geq 3$. Then 
    	$\liminf_{n\to \infty} \mu(\partial B_{A_n})\geq 2/5$. In particular, $\mu_{2k-1}\geq 2/5$.
    \end{proposition}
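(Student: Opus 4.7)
Let $T$ be any transversal of $\partial B_{A_n}$; the goal is to show $|T|\geq 2n/5 - O_k(1)$, which yields $\liminf_n \mu(\partial B_{A_n})\geq 2/5$. Write $L=\lfloor n/2\rfloor - k+1$.

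For every $i\in[1,L]$ and every $j\in\{1,2,3,4\}$ indexing the first four facet families of Lemma~\ref{lm: relative squeezed sphere facets}, denote by $Q_j(i)$ the associated peripheral vertex triple (for instance, $Q_1(i)=\{i,i+1,n-i+1\}$) and by $I_{i,j}$ the internal interval supporting the parameter $H\in\mathcal F_{2k-4}^{I_{i,j}}$. The requirement that $T$ meet every facet in family~$j$ with parameter $i$ forces a dichotomy: either $T\cap Q_j(i)\neq\emptyset$, or $T\cap I_{i,j}$ is a transversal of $\mathcal F_{2k-4}^{I_{i,j}}$. By Lemma~\ref{lm: BDL prop}, the latter alternative yields $|T|\geq \lceil|I_{i,j}|/2\rceil - k+3$.

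Fix the cutoff $m=\lfloor n/10\rfloor$. If the latter alternative of the dichotomy is triggered by any $i\leq m$ and any $j\in\{1,2,3,4\}$, then $|I_{i,j}|\geq n-2m-O_k(1)\geq 4n/5 - O_k(1)$, giving $|T|\geq 2n/5 - O_k(1)$, and we are done. In the opposite case, the former alternative holds for every $i\in[1,m]$ and every $j\in\{1,2,3,4\}$. A short case analysis—enumerating the ten $2$-element subsets of $P_i\coloneqq\{i,i+1,n-i-1,n-i,n-i+1\}$—shows that the simultaneous hitting of $Q_1(i),Q_2(i),Q_3(i),Q_4(i)$ by $T$ forces $T\cap P_i$ to contain at least one of the six admissible configurations
\[
\{i,i{+}1\},\ \{n{-}i{-}1,n{-}i{+}1\},\ \{i,n{-}i\},\ \{i,n{-}i{+}1\},\ \{i{+}1,n{-}i{-}1\},\ \{i{+}1,n{-}i\}.
\]

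From here I would derive $|T|\geq 2n/5 - O_k(1)$ by a refined double-counting: split $T$ into its ``low'' portion $T_L=T\cap[1,m{+}1]$ and ``high'' portion $T_H=T\cap[n{-}m{-}1,n]$, track which of the six configurations each index $i\in[1,m]$ uses, and exploit that the sole ``pure-low'' configuration forces $\{i,i{+}1\}\subseteq T_L$ while the sole ``pure-high'' one forces $\{n{-}i{-}1,n{-}i{+}1\}\subseteq T_H$. Facet family~$(5)$ of Lemma~\ref{lm: relative squeezed sphere facets} supplies the additional boundary obligation $T\cap\{1,n{-}1,n\}\neq\emptyset$, which would anchor the count against telescoping overlap.

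The main obstacle is this last counting step. A naive double-count based solely on $|T\cap P_i|\geq 2$ together with the fact that each vertex lies in at most three of the sets $P_i$ gives only $|T|\geq 2L/3\sim n/3$, well short of the target. Achieving the sharp $2/5$ bound requires exploiting the restricted list of admissible configurations (which precludes maximal $P_i$-overlap between consecutive indices) and tuning the cutoff~$m$ so that the two sides of the dichotomy match; a successful execution of this program, possibly by extending the range of $i$ beyond $[1,m]$ and invoking Lemma~\ref{lm: BDL prop} with a second-level dichotomy on the middle range, will complete the proof.
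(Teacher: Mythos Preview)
Your plan has the right skeleton (the dichotomy per index $i$, the appeal to Lemma~\ref{lm: BDL prop}, the case analysis of the four peripheral triples), but as written it cannot close, and the gap is structural rather than a matter of bookkeeping.

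The problem is the fixed cutoff $m=\lfloor n/10\rfloor$. In your ``otherwise'' branch, all information about $T$ is confined to the peripheral region $[1,m{+}1]\cup[n{-}m{-}1,n]$, which has only about $2m\approx n/5$ vertices. Even if $T$ contained this entire region you would only get $|T|\gtrsim n/5$, half of what you need; no refinement of the configuration count can extract $2n/5$ from a set of size $n/5$. Your closing remark about ``extending the range of $i$'' is exactly the right instinct, but the way to make it work is to abandon the fixed cutoff altogether. The paper lets $i$ be the \emph{first} index at which $T$ fails to hit one of the four triples. Then two contributions add: at the failing index $i$, Lemma~\ref{lm: BDL prop} forces $|T\cap[i{+}2,n{-}i{-}2]|\geq (n-2i)/2 - O(k)$; meanwhile, for all $j<i$ the four triples are hit, and a careful analysis of $T\cap([1,i]\cup[n{-}i,n])$ yields at least $4i/5-O(1)$ elements. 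Summing gives $n/2 - i/5 - O(k)\geq 2n/5 - O(k)$ for every $i\leq n/2$, and the two sides balance automatically.

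The remaining hard step is the peripheral bound $|T\cap([1,i]\cup[n{-}i,n])|\geq 4i/5$. Your list of six admissible $2$-element configurations is correct and is morally equivalent to the paper's starting point, but the paper executes the count differently: it decomposes the complement $T^c\cap[1,i]$ into maximal intervals $L_\ell$, shows that each $L_\ell$ forces a set $R_\ell\subseteq T\cap[n{-}i,n]$ of comparable size, controls the overlaps among the $R_\ell$, and then optimizes over interval patterns (the extremal pattern alternates singletons and pairs, yielding exactly $4i/5$). This interval argument is what replaces your unfinished ``refined double-counting''; carrying it out is where the $2/5$ actually emerges.
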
  
    \begin{proof}
    Fix $n \gg 0$. Let $T$ be a transversal of $\partial B_A$ and let $T^c$ be the complement of $T$ in $[1, n]$. 
		
    Let $i$ be the smallest number in $[1, \lceil  \frac{n}{2}\rceil -k+1]$ such that $T^c$ contains at least one of the following sets: $\{i, i+1, n-i+1\}$, $\{i, i+1, n-i-1\}$, $\{i+1, n-i, n-i+1\}$ and $\{i, n-i-1, n-i\}$. If no such $i$ exists, let $i=\lceil \frac{n}{2}\rceil$. By Lemma \ref{lm: relative squeezed sphere facets} (see the first four items), $T'\coloneqq T \cap [i+2, n-i-2]$ must be a transversal of $\mathcal{F}_{2k-4}^{[i+2, n-i-2]}$. 
    Let $\tilde{T}\coloneqq T \cap ([1, i]\cup [n-i, n])$.  
	By our choice of $i$, $\tilde{T}$ must be a transversal of the following collection $\A_i$ of $3$-sets 	
    	$$\A_i\coloneqq\cup_{j=1}^{i-1} \{ \{j,j+1,n-j+1\}, \{j,j+1,n-j-1\}, \{j+1, n-j, n-j+1\}, \{j, n-j-1,n-j\} \}.$$ 
			
    Since $T \supset \tilde{T}\cup T'$ and since by Lemma  \ref{lm: BDL prop}, $|T'|\geq \frac{1}{2}(n-2i) - O(k)$, to complete the proof, it suffices to show that $|\tilde{T}|\geq \frac{2}{5} \cdot 2i - O(1)$. 
		To do so, write $\tilde{T}^c\cap [1, i]$ as the union of maximal disjoint intervals  $[j_1, j'_1]\cup [j_2, j'_2]\cup \dots \cup [j_m, j'_m]$, where $j'_\ell +1 < j_{\ell+1}$ for all $\ell$, and let $L_\ell=[j_\ell, j'_\ell]$. Using that $\tilde{T}$ is a transversal of $\A_i$, we now find a lower bound on $|\tilde{T}|$ in terms of $\big|\tilde{T}^c\cap [1, i]\big|$, the sizes of these intervals, and $i$. There are the following possible cases for an interval $L_\ell=[j_\ell, j'_\ell]$:
    	\begin{itemize}
    		\item $j'_\ell-j_\ell\geq 2$. Then $R_\ell\coloneqq [n-j'_\ell, n-j_\ell+1]$ is a subset of $\tilde{T}$ and $|R_\ell|=|L_\ell|+1$.
    		\item $j'_\ell-j_\ell=1$. Then $R_\ell\coloneqq\{n-j'_\ell, n-j_\ell+1\}$ is a subset of $\tilde{T}$ and $|R_\ell|=2=|L_\ell|$.
    		\item $j'_\ell=j_\ell$. Then (assuming $1< j_\ell< i$), $\tilde{T}$ contains at least one element in $[n-j_\ell-1, n-j_\ell]$ and one element in $[n-j_\ell+1, n-j_\ell+2]$, and so $R_\ell\coloneqq\tilde{T}\cap [n-j_\ell-1, n-j_\ell+2]$ has size at least $2=|L_\ell|+1$.
    	\end{itemize}
	Since for every $\ell>1$, 	$j_{\ell}\geq j_{\ell-1}'+2$, it follow that the intervals $[n-j_\ell', n-j_\ell+1]$ and $[n-j_{\ell-1}', n-j_{\ell-1}+1]$ are disjoint. Consequently, if both $L_{\ell-1}$ and $L_\ell$ are {\em not singletons} (i.e., $j'_\ell>j_\ell$ and $j'_{\ell-1}>j_{\ell-1}$), then $R_{\ell-1}$ and $R_\ell$ are disjoint. 
		
		What happens if $L_\ell$ (for $\ell>1$) is a singleton? There are two possible cases. If $L_{\ell-1}$ is also a singleton, then by the third bullet point, $R_{\ell-1} \cup R_\ell$ contains at least one element in each of the following pairwise disjoint intervals: $[n-j_\ell-1, n-j_\ell]$, $[n-j_\ell+1, n-j_{\ell-1}]$, and $[n-j_{\ell-1}+1, n-j_{\ell-1}+2]$. Hence, $|R_{\ell-1} \cup R_\ell|\geq 3=(|L_{\ell-1}|+1)+(|L_\ell|+1)-1$.  Similarly, if $L_{\ell-1}$ is not a singleton, then $R_{\ell-1}$ and $R_\ell$ have at most one element in common (namely, $n-j_\ell+2$; this can happen only if  $j_\ell-j'_{\ell-1}=2$), and so in this case, $|R_{\ell-1} \cup R_\ell|\geq |R_{\ell-1}| + |R_\ell|-1$. A similar discussion applies to $R_\ell$ and $R_{\ell+1}$.

	Let $m$ be the total number of intervals. Let $t$ be the number of intervals with $j'-j\geq 2$ (i.e., intervals of size $\geq 3$), $x$ the number of singleton intervals,	$y$ the number of pairs $(L_{\ell-1}, L_{\ell})$ where exactly one of $L_{\ell-1}, L_{\ell}$ is a singleton,  and $z$ the number of pairs $(L_{\ell-1}, L_{\ell})$ where both $L_{\ell-1}$ and $L_{\ell}$ are singletons.  The three bullet points and the above discussion imply that
	\[ \big|\tilde{T}\cap [n-i, n] \big|\geq \left|\bigcup_{\ell=1}^{m}R_\ell\right| \geq \left(\sum_{\ell=1}^{m} \big|L_\ell\big|\right)+t+x-y-z= \big|\tilde{T}^c\cap [1, i]\big|+t+x-y-z.
	\]  
 Adding $\big|\tilde{T}\cap [1, i]\big|$ to the outer parts of this inequality, we conclude that $\big|\tilde{T}\big|\geq i+(t+x-y-z)$. 

It follows from our definitions that $y\leq m-1$ and $x-z\geq y/2$, and so to minimize $t+x-y-z$, we need to maximize $y$ and minimize $t$. This implies that the lower bound on $|\tilde{T}|$ is the smallest when the sequence of intervals $L_\ell$ in $\tilde{T}^c\cap [1, i]$ has the following properties: there are no intervals of size $\geq 3$ (i.e., $t=0$), singletons and $2$-element intervals alternate (i.e., $z=0$ and $y=m-1$), and the gaps between the intervals are as small as possible: every two adjacent intervals are separated by only one element, $j_1\leq 2$, and $j'_m\geq i-2$ (i.e., $x=i/5-O(1)$ and $y=2i/5-O(1)$). Hence $|\tilde{T}| \geq i+(i/5-2i/5)-O(1)=\frac{2}{5} \cdot 2i-O(1)$, as desired. (For example, $\tilde{T}^c\cap [1,i]$ could be $[1,2] \cup\{4\} \cup [6,7] \cup \{9\} \cup \ldots $ in which case $\tilde{T}\cap [1,i]=\{3, 5, 8, 10, \dots\}$.) 
    \end{proof}

    \subsection{The transversal number of $\Delta^d_n$}
 Our goal in this section is to estimate the transversal ratio of the family of cs PL $d$-spheres $\Delta^{d}_n$. We begin by reviewing some relevant definitions, see \cite{NZ-cs-neighborly, NZ-cs-neighborly-new} for more details. If $\Gamma$ and $\Delta$ are pure simplicial complexes of the same dimension and $\Gamma\subset \Delta$, then $\Delta\backslash\Gamma$ denotes the subcomplex of $\Delta$ generated by facets of $\Delta$ that are not facets of $\Gamma$. 

The vertex set of $\Delta^{d}_n$ is $V_n\coloneqq [-1,-n]\cup[1,n]$. The definition of $\Delta^{d}_n$ is by interlaced recursion that introduces not only $\Delta^{d}_n$, but also certain PL $d$-balls $B^{d,i}_n$ that for $i\leq \lceil d/2\rceil -1$ are contained in  $\Delta^{d}_n$. 
    
		\begin{definition} \label{main-constr}
    	Let $d\geq 1$, $i\leq \lceil d/2\rceil$, and $n\geq d+1$ be integers. Define $\Delta^{d}_n$ and $B^{d,i}_n$ inductively as follows: 
	\begin{itemize}
		\item For the initial cases, define
		$\Delta^{1}_{n}$ to be the cycle $(1, 2,\dots, n,-1,-2,\dots, -n,1)$ and $\Delta^{d}_{d+1}$ to be the boundary complex of the $(d+1)$-dimensional cross-polytope $\C^*_{d+1}$; define 
		$B^{d,j}_n\coloneqq\emptyset$   if $j<0$, and $B^{1, 0}_n\coloneqq\overline{\{-1, n\}}$. (In particular, $B^{1,j}_n\subseteq \Delta^{1}_n$ for all $j\leq 0$.)
		
		\item
		If $\Delta^{d-1}_{m}$ and $B^{d-1,i}_m \subseteq \Delta^{d-1}_{m}$ are already defined for all $i\leq \lfloor (d-1)/2\rfloor$ and $m\geq d$, then for $d=2k$ and $n\geq 2k$, define $B^{2k-1,k}_{n}\coloneqq\Delta^{2k-1}_{n}\backslash B^{2k-1,k-1}_{n}$; in addition, for all $n\geq d+1$ and $i\leq \lfloor d/2\rfloor$, define $$B^{d, i}_n\coloneqq\left(B^{d-1, i}_{n-1}*n\right) \cup\left((-B^{d-1,i-1}_{n-1})*(-n)\right).$$ 
		
		\item	 
		If $\Delta^{d}_n$ is already defined, then define $\Delta^{d}_{n+1}$ as the complex obtained from $\Delta^{d}_n$ by replacing the subcomplex $B^{d,\lceil d/2\rceil -1}_n$ with $\partial B^{d,\lceil d/2\rceil -1}_n * (n+1)$ and $-B^{d,\lceil d/2\rceil -1}_n$ with $\partial (-B^{d,\lceil d/2\rceil -1}_n) * (-n-1)$.
	\end{itemize}
\end{definition}
A big portion of \cite{NZ-cs-neighborly} is devoted to proving that $\Delta^d_n$ and $B^{d, i}_n$ are well-defined, that $\Delta^d_n$ is a cs PL sphere while $B^{d, i}_n$ are PL balls, and that $\Delta^d_n$ is cs-$\lceil d/2\rceil$-neighborly.

To estimate the transversal number of $\Delta^d_n$, we need an explicit description of some of the facets of $\Delta^d_n$. For $d=3$ and $d=4$, the complete lists of facets are recorded in \cite[Corollary 3.2]{NZ-cs-neighborly-new} and \cite[Proof of Theorem 5.13]{Pfeifle-20}, respectively. For any $d=2k-1\geq 5$, a large collection of facets of $\Delta^{2k-1}_n$ was identified in \cite[Proposition 3.5]{NZ-cs-neighborly-new}; the following result states a special case of this proposition.
     
We say that a $2k$-set $G=\{p_1, \dots ,p_{2k}\}\subset V_n$ {\em satisfies property $P_n$} if 
    \begin{enumerate}
    	\item $1\leq |p_1|<|p_2|<\dots <|p_{2k}|\leq n$,
    	\item the numbers $p_{2i-1}$ and $p_{2i}$ have the same sign for $1\leq i\leq k$, and
    	\item $|p_{2}|-|p_{1}|=1$ while $|p_{2i}|-|p_{2i-1}|=2$ for $2\leq i\leq k$.
    \end{enumerate}
\begin{lemma} 	\label{rem:facets-of-Delta} 	
    Any $2k$-set $G\subset V_n$ that satisfies property $P_n$ is a facet of $\Delta^{2k-1}_n \backslash \pm B^{2k-1, k-1}_n$. 
\end{lemma}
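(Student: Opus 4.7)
The natural strategy is a double induction, primarily on $n$, with the inductive step exploiting the replacement rule from Definition \ref{main-constr}. A secondary induction on $d$ will handle the technical core hidden in the larger case.

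\emph{Base case.} For $n=2k$, the complex $\Delta^{2k-1}_{2k}=\partial\C^*_{2k}$. Property $P_n$ forces the inequalities $|p_{2i}|\geq 3i-1$, so $|p_{2k}|\geq 3k-1>2k$ whenever $k\geq 2$. Hence no $2k$-set satisfies $P_{2k}$ and the statement is vacuous. The same minimum computation shows $P_n$ is unsatisfiable until $n\geq 3k-1$, so there is a stretch of trivially true cases before anything nontrivial happens.

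\emph{Inductive step.} Assume the lemma for $n-1$ and let $G=\{p_1,\dots,p_{2k}\}$ satisfy $P_n$. The plan splits on whether $|p_{2k}|=n$. In the \emph{easy case} $|p_{2k}|<n$, we have $G\subset V_{n-1}$ and $G$ satisfies $P_{n-1}$, so by the inductive hypothesis $G$ is a facet of $\Delta^{2k-1}_{n-1}\setminus \pm B^{2k-1,k-1}_{n-1}$. The construction of $\Delta^{2k-1}_n$ only modifies facets of $\Delta^{2k-1}_{n-1}$ that lie in $\pm B^{2k-1,k-1}_{n-1}$, so $G$ survives as a facet of $\Delta^{2k-1}_n$; and since every facet of $\pm B^{2k-1,k-1}_n$ must contain $\pm n$ by the recursive formula $B^{2k-1,k-1}_n=(B^{2k-2,k-1}_{n-1}*n)\cup(-B^{2k-2,k-2}_{n-1}*(-n))$, the set $G$ avoids $\pm B^{2k-1,k-1}_n$ and we are done.

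\emph{Hard case.} Now assume $|p_{2k}|=n$; by the central symmetry of everything involved, I may take $p_{2k}=n$, and then $P_n$ forces $p_{2k-1}=n-2$. Write $G'=G\setminus\{n\}$. From Definition \ref{main-constr} the facets of $\Delta^{2k-1}_n$ containing $n$ are exactly $\{n\}\cup H$ with $H$ a facet of $\partial B^{2k-1,k-1}_{n-1}$, while the facets of $B^{2k-1,k-1}_n$ containing $n$ are exactly $\{n\}\cup H'$ with $H'$ a facet of $B^{2k-2,k-1}_{n-1}$. So after noting that $G$ cannot lie in $-B^{2k-1,k-1}_n$ (no $-n$), what remains is to verify
(i) $G'$ is a facet of $\partial B^{2k-1,k-1}_{n-1}$, i.e.\ it sits inside a unique facet of $B^{2k-1,k-1}_{n-1}$, and
(ii) $G'$ is \emph{not} a facet of $B^{2k-2,k-1}_{n-1}$.
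For (i), unravel $B^{2k-1,k-1}_{n-1}=(B^{2k-2,k-1}_{n-2}*(n-1))\cup(-B^{2k-2,k-2}_{n-2}*(-(n-1)))$: the unique facet of $B^{2k-1,k-1}_{n-1}$ containing $G'$ should be $G'\cup\{n-1\}$, using that $p_{2k-1}=n-2$ and the pair pattern of $P_n$ match precisely the recursive description of facets in $B^{2k-2,k-1}_{n-2}$, while the $-(n-1)$-branch is ruled out because $G'$ already contains the positive element $n-2$. For (ii), the gap-$2$ pattern at the top of $G'$ (versus the consecutive pattern required on the top pair of a facet of $B^{2k-2,k-1}_{n-1}$ under its own recursion) rules this out.

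\emph{Main obstacle.} The substantive work is (i) and (ii): extracting from the recursive definition an explicit facet characterization and showing that a set with the $P_n$-pattern lies on the interior-facing boundary of $B^{2k-1,k-1}_{n-1}$ but is not itself a full-dimensional facet one dimension down. I expect to need a nested induction on $d$ in parallel with the induction on $n$, where the inductive hypothesis is strengthened into a simultaneous description of which $(2k-1)$-subsets are facets of $B^{2k-1,k-1}_{n-1}$ and which are facets of its boundary. This is precisely the content of \cite[Proposition 3.5]{NZ-cs-neighborly-new}, of which the stated lemma is a special case; the cleanest route is to organize this bookkeeping once for all $d$, and then specialize to extract the $P_n$-pattern.
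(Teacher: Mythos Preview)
The paper does not prove this lemma at all: it simply states it as a special case of \cite[Proposition~3.5]{NZ-cs-neighborly-new} and moves on. So there is no ``paper's own proof'' to compare against, and in fact your final paragraph lands on exactly the same citation. In that sense your proposal and the paper agree on where the real content lives.

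That said, a couple of comments on your sketch as a standalone argument. Your base case and the easy case $|p_{2k}|<n$ are fine. In the hard case there is a slip: you dismiss $-B^{2k-1,k-1}_n$ with ``no $-n$'', but
\[
-B^{2k-1,k-1}_n=\big(-B^{2k-2,k-1}_{n-1}*(-n)\big)\cup\big(B^{2k-2,k-2}_{n-1}*n\big),
\]
so $-B^{2k-1,k-1}_n$ \emph{does} have facets containing $n$, namely $\{n\}\cup H''$ with $H''$ a facet of $B^{2k-2,k-2}_{n-1}$. The correct way to rule this out is to note (via the nesting $B^{2k-2,k-2}_{n-1}\subset B^{2k-2,k-1}_{n-1}$ from \cite[Lemma~3.3]{NZ-cs-neighborly}) that your condition (ii) already excludes $G'$ from $B^{2k-2,k-2}_{n-1}$ as well. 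More substantially, your items (i) and (ii) are precisely the heart of the matter, and you do not actually verify them: the claim that the ``gap-$2$ pattern at the top of $G'$'' is incompatible with the facet description of $B^{2k-2,k-1}_{n-1}$ needs a careful unwinding of the double recursion, and similarly for (i). As you yourself recognize, doing this properly amounts to reproving \cite[Proposition~3.5]{NZ-cs-neighborly-new}, which is why the paper simply cites it.
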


	The next lemma provides an analogous result for $\Delta^{2k}_n$. 
    \begin{lemma}  \label{Lem:facets-of-Delta}
    	Let $k\geq 2$ and $n<m$. If $F$ is of the form $  \{p_{1},\dots, p_{2k}, p_{2k+1}\}$, where 
    	\begin{enumerate}
    		\item $\{p_{2k-1}, p_{2k}, p_{2k+1}\}=\{n-2, n-1, n+1\}$ or $\{n-2, n, n+1\}$, and
    		\item $G=\{p_1, \dots, p_{2k-2}\}$ has property $P_{n-3}$,
    	\end{enumerate}
    	then $F$ and $-F$ are facets of $\Delta^{2k}_m\backslash \pm B^{2k, k-1}_m$. 
    \end{lemma}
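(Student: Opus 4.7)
The plan is to show that $F$ first appears as a facet of $\Delta^{2k}_{n+1}$ and then persists in $\Delta^{2k}_m$ while never lying in $\pm B^{2k,k-1}_m$; the statement for $-F$ follows by central symmetry of the construction. A uniform structural observation drives both persistence and non-membership: the recursion $B^{d,i}_{n'}=(B^{d-1,i}_{n'-1}*n')\cup((-B^{d-1,i-1}_{n'-1})*(-n'))$ forces every facet of $B^{d,i}_{n'}$ to contain the vertex $n'$ or $-n'$. Since $F\subset V_{n+1}$, it follows that $F$ cannot be a facet of $\pm B^{2k,k-1}_{n'}$ for any $n'>n+1$, so all sewing steps past $n+1$ leave $F$ intact. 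It therefore suffices to show that $F$ is a facet of $\Delta^{2k}_{n+1}$ but not of $\pm B^{2k,k-1}_{n+1}$.

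To rule out $F\in\pm B^{2k,k-1}_{n+1}$, iterate the structural observation. If $F$ were a facet of $B^{2k,k-1}_{n+1}$, then (since $F$ contains $n+1$) $F\setminus\{n+1\}$ would be a facet of $B^{2k-1,k-1}_n$ and would itself have to contain $\pm n$. In Case (i), $F\setminus\{n+1\}=G\cup\{n-2,n-1\}$ contains neither $n$ nor $-n$, so this already fails. In Case (ii), it would further require $G\cup\{n-2\}$ to be a facet of $B^{2k-2,k-1}_{n-1}$ and hence to contain $\pm(n-1)$, but property $P_{n-3}$ guarantees $|p_i|\leq n-3$ for every element of $G$, contradiction. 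The argument for $-B^{2k,k-1}_{n+1}$ is symmetric.

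The substantive step is that $F\setminus\{n+1\}$ is a facet of $\partial B^{2k,k-1}_n$, equivalently that it lies in a unique facet of $B^{2k,k-1}_n=A\cup C$, where $A=B^{2k-1,k-1}_{n-1}*n$ and $C=(-B^{2k-1,k-2}_{n-1})*(-n)$. In both cases I claim the unique such facet is $G\cup\{n-2,n-1,n\}\in A$. Non-containment of $F\setminus\{n+1\}$ in any facet of $C$ follows by a parallel iterated unwinding of Definition \ref{main-constr} using $P_{n-3}$: the positive vertices $n-2,n-1$ (resp.\ $n-2,n$) of $F\setminus\{n+1\}$ force the descent into the $B^{d-1,i}*m$ branch rather than the $(-B^{d-1,i-1})*(-m)$ branch at consecutive recursion levels. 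Uniqueness of the containing facet in $A$ reduces to showing that $G\cup\{n-2,n-1\}$ is a facet of $B^{2k-1,k-1}_{n-1}$, either directly (Case (i), where it must equal the facet $\sigma$ with $\sigma*n$ containing $F\setminus\{n+1\}$) or as the unique facet of $B^{2k-1,k-1}_{n-1}$ extending $G\cup\{n-2\}$ (Case (ii)). This final claim is established by peeling off the cone vertices $n-1,n-2,\ldots$ one level at a time via the recursion, using the one-step gap $|p_2|-|p_1|=1$ and the two-step gaps $|p_{2i}|-|p_{2i-1}|=2$ of $P_{n-3}$ to force the positive branch at every descent, eventually terminating at the base case of Definition \ref{main-constr}.

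The principal technical obstacle is this inductive unwinding. At each level one must verify that the current top positive vertex sits in the $B^{d-1,k-1}_{m-1}*m$ branch, that the surviving set continues to satisfy a shifted version of property $P$, and that no competing facets arise from the $(-B^{d-1,k-2}_{m-1})*(-m)$ branch. A double induction on $k$ and on the recursion depth, modeled closely on the proof of Lemma \ref{rem:facets-of-Delta} and on \cite[Proposition 3.5]{NZ-cs-neighborly-new}, should carry this through.
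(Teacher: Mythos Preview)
Your overall strategy is sound and coincides with the paper's at the top level: reduce to the step $n\to n+1$, use that every facet of $B^{d,i}_{n'}$ contains $\pm n'$ to get persistence and non-membership in $\pm B^{2k,k-1}_{n'}$ for $n'>n+1$, and then check that $F\setminus\{n+1\}\in\partial B^{2k,k-1}_n$ while $F\notin\pm B^{2k,k-1}_{n+1}$. Your elimination of the $\pm B^{2k,k-1}_{n+1}$ cases by two levels of unwinding is correct.

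Where you diverge from the paper is in the ``substantive step.'' You propose to show directly that $G\cup\{n-2,n-1,n\}$ is the unique facet of $B^{2k,k-1}_n$ containing $F\setminus\{n+1\}$, via a from-scratch double induction that re-derives the needed membership and non-membership facts about $G$. The paper instead cashes in work already done: it invokes Lemma~\ref{rem:facets-of-Delta} to get immediately that $G$ is a facet of $\Delta^{2k-3}_{n-3}\setminus\pm B^{2k-3,k-2}_{n-3}=B^{2k-3,k-1}_{n-3}\setminus(-B^{2k-3,k-2}_{n-3})$, uses the two-level decomposition of $B^{2k,k-1}_n$ from \cite[Section~3]{NZ-cs-neighborly}, and applies the inclusion chain $B^{d,i-2}_m\subset -B^{d,i-1}_m\subset B^{d,i}_m$ from \cite[Lemma~3.3]{NZ-cs-neighborly} to compute the relevant part of $\partial B^{2k,k-1}_n$ explicitly as $S_1\cup S_2\cup S_3$. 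The two cases then drop out by inspection: $G\cup\{n-2,n-1\}\in S_1$ and $G\cup\{n-2,n\}\in S_2$.

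If you look back at your own argument, the places where your induction would have to do work---showing $G\cup\{n-2\}\notin -B^{2k-2,k-2}_{n-2}$ and $G\cup\{n-2\}\notin B^{2k-2,k-3}_{n-2}$---reduce via one more unwinding to $G\notin B^{2k-3,k-3}_{n-3}$, which follows from Lemma~\ref{rem:facets-of-Delta} combined with exactly that inclusion chain. So rather than modeling your induction on the proof of Lemma~\ref{rem:facets-of-Delta}, you should simply \emph{cite} it, together with \cite[Lemma~3.3]{NZ-cs-neighborly}; the remaining verification is then two or three lines. Your route is not wrong, just longer than necessary.
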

    \begin{proof}
		According to Lemma \ref{rem:facets-of-Delta}, the statement would follow if we show that for every facet $G$ of $\Delta^{2k-3}_{n-3}\backslash {\pm B^{2k-3, k-2}_{n-3}}$, both $F_1\coloneqq G\cup \{n-2, n-1, n+1\}$ and $F_2\coloneqq G\cup \{n-2, n, n+1\}$ are facets of $\Delta^{2k}_m\backslash \pm B^{2k, k-1}_m$. So consider such $G$ and let $i\in\{1,2\}$.
		
    	The maximum of $F_i$ is $n+1$. Hence $F_i$ is a facet of $\Delta^{2k}_m$ if $F_i\in \partial B^{2k, k-1}_{n}*(n+1)\backslash \pm B^{2k, k-1}_{n+1}$. That is, $F_i$ is a facet of $\Delta^{2k}_m$ if $F_i\backslash (n+1) \in \Gamma_n\coloneqq\partial B^{2k, k-1}_{n}\backslash (B^{2k-1, k-1}_{n}\cup B^{2k-1, k-2}_{n})$. By \cite[Section 3, page 7]{NZ-cs-neighborly}, 
			\[B^{2k, k-1}_{n}=
			(B^{2k-2, k-1}_{n-2}*(n-1, n))\cup (-B^{2k-2, k-2}_{n-2}*(n,-n+1, -n))\cup (B^{2k-2, k-3}_{n-2}*(-n, n-1));
		\]
			on the other hand, the facets of $B^{2k-1, k-1}_{n}\cup B^{2k-1, k-2}_{n}$ always contain one of $\pm (n-1)$ and one of $\pm n$. Thus, computing the boundary of $B^{2k, k-1}_{n}$ and keeping in mind that (see \cite[Lemma 3.3]{NZ-cs-neighborly})  
			$B^{2k-2, k-3}_{n-2}\subset -B^{2k-2, k-2}_{n-2} \subset B^{2k-2, k-1}_{n-2}$, we conclude that 
    	\begin{equation*}
    		\begin{split}
    			\Gamma_n&\supseteq \left((B^{2k-2, k-1}_{n-2}\backslash B^{2k-2, k-3}_{n-2})*(n-1)\right) \cup  \left((B^{2k-2, k-1}_{n-2}\backslash (-B^{2k-2, k-2}_{n-2}))*(n)\right) \\
    			&\quad \cup  \left((-B^{2k-2, k-2}_{n-2}\backslash B^{2k-2, k-3}_{n-2})*(-n)\right)=: S_1\cup S_2\cup S_3. \\
    		\end{split}
    	\end{equation*}
			The result follows since if $G$ is a facet of 
			$$\Delta^{2k-3}_{n-3}\backslash \pm B^{2k-3, k-2}_{n-3}=B^{2k-3,k-1}_{n-3} \backslash (-B^{2k-3, k-2}_{n-3}) \subset B^{2k-3,k-1}_{n-3} \backslash B^{2k-3, k-3}_{n-3},$$ then 
			$F_1\backslash (n+1)=G\cup\{n-2,n-1\} \in S_1\subset \Gamma_n$ and $F_2\backslash (n+1)=G\cup\{n-2,n\} \in S_2\subset \Gamma_n$.
    \end{proof}
	
		While the following lemma will not be used in this paper, we state it for completeness.
    \begin{lemma}
		Let $k\geq 2$ and and let $G$ be a $(2k-2)$-set that has property $P_{n-3}$. Then
    $F=G\cup \{n-2, n-1, n\}$ is a facet of $B^{2k, k-1}_n$.
    \end{lemma}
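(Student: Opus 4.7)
The plan is to peel off the three large positive elements $n, n-1, n-2$ of $F$ in stages, using the decompositions of $B^{d,i}_n$ that already appeared in the proof of Lemma \ref{Lem:facets-of-Delta}. The key constraint is that $G$ has property $P_{n-3}$, so $|p|\leq n-3$ for every $p\in G$; in particular $F$ contains none of $-n$, $-(n-1)$, $-(n-2)$, and this forces $F$ into a specific summand of the recursive decomposition at each stage.

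For the first stage I will invoke the identity
\[
B^{2k, k-1}_n = \bigl(B^{2k-2, k-1}_{n-2} * (n-1, n)\bigr) \cup \bigl((-B^{2k-2, k-2}_{n-2})*(n,-n+1,-n)\bigr) \cup \bigl(B^{2k-2, k-3}_{n-2}*(-n, n-1)\bigr)
\]
already recorded in the proof of Lemma \ref{Lem:facets-of-Delta}. Every face in the second summand contains $-(n-1)$ and every face in the third summand contains $-n$, neither of which appears in $F$. Hence $F$ can come only from the first summand, and the question reduces to showing that $G\cup\{n-2\}$ is a facet of $B^{2k-2, k-1}_{n-2}$.

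For the second stage I will apply the basic recursion of Definition \ref{main-constr}:
\[
B^{2k-2, k-1}_{n-2} = \bigl(B^{2k-3, k-1}_{n-3} * (n-2)\bigr) \cup \bigl((-B^{2k-3, k-2}_{n-3}) * (-(n-2))\bigr).
\]
Since $G\cup\{n-2\}$ contains $n-2$ positively, only the first summand is relevant, so it suffices to verify that $G$ is a facet of $B^{2k-3, k-1}_{n-3}$. For this I appeal to the identity $B^{2k-3, k-1}_{n-3} = \Delta^{2k-3}_{n-3}\backslash B^{2k-3, k-2}_{n-3}$ coming from the first bullet of Definition \ref{main-constr} and apply Lemma \ref{rem:facets-of-Delta} to $G$ with $k$ replaced by $k-1$ and $n$ by $n-3$: this yields that $G$ is a facet of $\Delta^{2k-3}_{n-3}\backslash \pm B^{2k-3, k-2}_{n-3}$, which, using $-B^{2k-3, k-2}_{n-3}\subset B^{2k-3, k-1}_{n-3}$, is a subcomplex of $B^{2k-3, k-1}_{n-3}$, so $G$ is in particular a facet of $B^{2k-3, k-1}_{n-3}$, as desired.

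The main obstacle is essentially bookkeeping: at each stage I must check that the candidate face cannot sit in the "other" summand of the decomposition, so that its membership in the first summand is enough to make it a facet of the whole union. The sign and magnitude constraints imposed by property $P_{n-3}$ handle this cleanly. A minor additional caveat is the base case $k=2$, where the final step concerns $B^{1,1}_{n-3} = \Delta^1_{n-3}\backslash \overline{\{-1, n-3\}}$; here any $G=\{i, i+1\}$ or $\{-i,-(i+1)\}$ with $1\leq i\leq n-4$ is an edge of the cycle $\Delta^1_{n-3}$ distinct from $\{-1, n-3\}$, hence still a facet of $B^{1,1}_{n-3}$, and the argument goes through unchanged.
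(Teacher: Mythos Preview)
Your proof is correct and follows essentially the same route as the paper. The paper condenses your two peeling stages into the single containment $B^{2k,k-1}_n \supseteq B^{2k-3,k-1}_{n-3} * \overline{\{n-2,n-1,n\}}$ and then, exactly as you do, invokes Lemma~\ref{rem:facets-of-Delta} together with $B^{2k-3,k-1}_{n-3} = \Delta^{2k-3}_{n-3}\backslash B^{2k-3,k-2}_{n-3}$ to place $G$ inside $B^{2k-3,k-1}_{n-3}$; your two-step unpacking of that containment via the decompositions in Definition~\ref{main-constr} and the proof of Lemma~\ref{Lem:facets-of-Delta} is just a more explicit version of the same argument.

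One small remark: your final paragraph overstates the ``obstacle.'' Since all the summands are pure of the same dimension, once $F$ is a facet of \emph{any} summand it is automatically a facet of the union; you never actually need to rule out the other summands. The sign/magnitude observations about $-(n-1)$, $-n$, $-(n-2)$ serve only to tell you \emph{which} summand to look in, not to certify facethood. Also, the containment $-B^{2k-3,k-2}_{n-3}\subset B^{2k-3,k-1}_{n-3}$ is not needed: a facet of $\Delta^{2k-3}_{n-3}\backslash \pm B^{2k-3,k-2}_{n-3}$ is in particular a facet of $\Delta^{2k-3}_{n-3}\backslash B^{2k-3,k-2}_{n-3}=B^{2k-3,k-1}_{n-3}$ directly.
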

    \begin{proof}
    	This follows immediately from the fact that $B^{2k, k-1}_n\supseteq B^{2k-3, k-1}_{n-3}*\overline{\{n-2, n-1, n\}}$ and $G\in \Delta^{2k-3}_{n-3}\backslash \pm B^{2k-3, k-2}_{n-3} \subset B^{2k-3, k-1}_{n-3}$.
    \end{proof}
		
		We are now in a position to establish the main result of this subsection.
    \begin{proposition} Let $k\geq 2$. Then
    	$\liminf_{n\to \infty} \mu(\Delta^{2k}_n)\geq \frac{2}{5}$ while $\liminf_{n\to \infty} \mu(\Delta^{2k-1}_n)\geq \frac{1}{2}$. In particular, $\mu_{2k+1}\geq 2/5$ and $\mu_{2k}\geq 1/2$.
    \end{proposition}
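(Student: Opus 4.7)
My plan is to adapt the two-sided strategy from the proof of Proposition \ref{prop:mu-of-partialB_S} to the cs setting: given a transversal $T$, I will bound $T^c_+ \coloneqq T^c \cap [1,n]$ and $T^c_- \coloneqq T^c \cap [-n,-1]$ separately, using cs symmetry to pass from one side to the other. In each case I restrict attention to the all-positive facets (those lying entirely in $[1,n]$) provided by Lemmas \ref{rem:facets-of-Delta} and \ref{Lem:facets-of-Delta}; this turns each estimate into a purely one-sided extremal problem on $[1,n]$, and cs symmetry transports the bound to $T^c_-$.

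For $\Delta^{2k-1}_n$ (aiming for $\mu \geq 1/2$), Lemma \ref{rem:facets-of-Delta} implies that every $2k$-subset of $[1,n]$ of the form $\{a, a+1, b_1, b_1+2, \ldots, b_{k-1}, b_{k-1}+2\}$ with $a+1 < b_1$ and $b_{i+1} \geq b_i + 3$ is a facet. The key sub-lemma, the direct analog of Lemma \ref{lm: BDL prop} for this family, asserts that any subset of $[1,n]$ avoiding it has size at most $n/2 + O_k(1)$. I would prove it by splitting on whether $T^c_+$ contains a consecutive pair: if not, the independent-set bound gives $|T^c_+| \leq \lceil n/2 \rceil$; otherwise, let $(a_0, a_0+1)$ be the leftmost consecutive pair, so that $T^c_+ \cap [a_0+2,n]$ cannot carry $k-1$ pairs $(b_i, b_i+2)$ with $b_{i+1} - b_i \geq 3$. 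A parity pigeonhole together with the standard estimate that any subset of size $s$ of a path of length $r$ contains at least $\max(0, 2s - r - 1)$ adjacent pairs produces such a chain as soon as $|T^c_+ \cap [a_0+2,n]|$ exceeds $(n-a_0)/2 + O_k(1)$. Summing, $|T^c_+| \leq n/2 + O_k(1)$; cs symmetry yields the same bound for $|T^c_-|$, and thus $\mu(\Delta^{2k-1}_n) \to 1/2$.

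For $\Delta^{2k}_n$ (aiming for $\mu \geq 2/5$), I apply Lemma \ref{Lem:facets-of-Delta}, renaming its internal $n$ to $j$ to avoid collision with the sphere parameter. For each level $j < n$ and each all-positive $(2k-2)$-set $G \subset [1,j-3]$ satisfying $P_{j-3}$, the facets $G \cup \{j-2, j-1, j+1\}$ and $G \cup \{j-2, j, j+1\}$ both lie in $\Delta^{2k}_n$. If $G \subseteq T^c_+$, then $T$ must meet both tails, which translates into the condition that $T^c_+$ contains neither $\{j-2, j-1, j+1\}$ nor $\{j-2, j, j+1\}$; I call this "$j$-blocked". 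Define $j^*_+$ as the smallest $j$ admitting some witness $G \subseteq T^c_+$ (and $j^*_+ = \infty$ if none exists). Because any such $G$ continues to satisfy $P_{j-3}$ for every $j \geq j^*_+$, $T$ must be $j$-blocked for all $j \geq j^*_+$; hence $T^c_+ \cap [j^*_+ - 2, n]$ avoids the two $3$-tails at every position, while for $j < j^*_+$ the odd-case sub-lemma applied with $k$ replaced by $k-1$ produces $|T^c_+ \cap [1, j^*_+ - 4]| \leq (j^*_+ - 4)/2 + O_k(1)$. The remaining ingredient is a second sub-lemma: any subset of $[1,r]$ avoiding both $\{a, a+1, a+3\}$ and $\{a, a+2, a+3\}$ has size at most $(3/5) r + O(1)$. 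Encoding membership as a binary string, the forbidden factors are exactly $1101, 1011, 1111$; a short DP on the state "last three bits" shows the optimum cycle mean in the induced transition graph is $3/5$, realized by the periodic set $\{1,2,3,6,7,8,11,12,13,\ldots\}$. Combining yields $|T^c_+| \leq (3/5) n - j^*_+/10 + O_k(1) \leq (3/5) n + O_k(1)$ when $j^*_+$ is finite, and the even stronger bound $|T^c_+| \leq n/2 + O_k(1)$ when $j^*_+ = \infty$; cs symmetry gives the same for $|T^c_-|$, so $|T^c| \leq (6/5) n + O_k(1)$ and $\mu(\Delta^{2k}_n) \to 2/5$.

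The main technical obstacle is the parity/pigeonhole step in the odd-case sub-lemma: building a chain of $k-1$ well-spaced $(b, b+2)$ pairs from density information alone requires a careful choice of constants so that the $O_k(1)$ error remains independent of $n$ across all $k \geq 2$. The three-tail density bound, although its asymmetric shape is unusual, is a routine automaton exercise.
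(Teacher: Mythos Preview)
Your proposal is correct and the overall architecture---restrict to all-positive facets, bound $|T^c_+|$, and transport to $|T^c_-|$ by the cs involution---is exactly what the paper does. The difference lies in how the one-sided density bound on $T^c_+$ is extracted.

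The paper argues directly on the interval decomposition of $T^c_+$. For $d=2k$ it proves two structural claims: (i) the maximal intervals of $T^c_+$ of length $\geq 4$ together occupy only $O(k)$ elements (otherwise one can thread a facet of the shape in Lemma~\ref{Lem:facets-of-Delta} through them), and (ii) at most $k-1$ singleton intervals of $T_+$ can sit immediately to the left of a $T^c_+$-interval of length $\geq 2$ (otherwise the pairs $\{y_i-1,y_i+1\}$ again assemble into a facet). Together these force $[a_1,n]$ (after deleting an $O(k)$ exceptional set) to decompose into adjacent pairs $(I,J)$ with $I\subset T_+$, $J\subset T^c_+$ and either $|I|\geq 2,\ |J|\leq 3$ or $|I|=|J|=1$, giving $|T_+\cap[a_1,n]|\geq \tfrac{2}{5}(n-a_1)-O(k)$. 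For $d=2k-1$ the same scheme works with ``length $\geq 3$'' in (i), yielding the $1/2$ bound.

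Your route instead isolates a threshold ($a_0$ in the odd case, $j^*_+$ in the even case) and reduces each region to a clean extremal sub-lemma: for $d=2k-1$, ``a subset of $[1,m]$ with no $k-1$ well-spaced $(b,b{+}2)$ pairs has size $\leq m/2+O_k(1)$''; for $d=2k$, ``a subset of $[1,r]$ avoiding the $4$-gram patterns $1011,1101,1111$ has size $\leq \tfrac{3}{5}r+O(1)$'', proved by a max-mean-cycle computation on the $3$-bit de~Bruijn automaton. The witness-persistence step (a $G$ with property $P_{j^*_+-3}$ also has $P_{j-3}$ for every $j\geq j^*_+$) is what makes the blocking uniform on $[j^*_+-2,n]$, and your pigeonhole for the odd sub-lemma goes through: the number of distance-$2$ pairs in $R\subset[1,m]$ is at least $2|R|-m-O(1)$, and any set of $t$ such pair-starts contains $\lceil t/3\rceil$ that are $\geq 3$ apart, so $|R|>m/2+O(k)$ forces $k-1$ well-spaced pairs.

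In short, the paper's argument is a hands-on interval bookkeeping that produces the $2/5$ and $1/2$ constants by an explicit $(I,J)$ pairing, while yours modularizes the same content into two self-contained forbidden-pattern density lemmas, the $3/5$ one via an automaton. Both give the same bounds; your version separates the combinatorics from the sphere-specific input (Lemmas~\ref{rem:facets-of-Delta} and~\ref{Lem:facets-of-Delta}) more cleanly, at the cost of an extra layer of abstraction.
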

    \begin{proof}
		We first discuss the case of $d=2k$.
    	Let $T=T_+\cup T_-$ be a transversal of $\Delta^{d}_n$, where $T_+$ and $T_-$ consist of positive and negative vertices, respectively. It suffices to show that $|T^+|\geq \frac{2}{5}n +O(k)$. (By symmetry, this will also imply that $|T^-|\geq \frac{2}{5}n +O(k)$.) To do so, write $T_+= \cup_{p=1}^m I_p$ as the union of maximal disjoint intervals. Similarly, write the complement of $T_+$ in $[1,n]$ as the union of maximal disjoint intervals $T_+^c=\cup_{p=1}^{m'} J_p$. Let $[a_1, b_1]$ be the first non-singleton interval in $T^c_+$. Then $|T^+\cap [1,a_1-1]|\geq \frac{1}{2}(a_1-1)$. Hence to complete the proof, it suffices to show that $|T_+\cap[a_1,n]|\geq \frac{2}{5}(n-a_1)-O(k)$. This will follow from the next two claims.
			
\smallskip\noindent{\bf Claim 1}: {\em The intervals in $T^c_+$ of size $\geq 4$ together occupy at most $O(k)$ elements.} \\
\proof Let $[a_\ell, b_\ell]$ be the last interval of size $\geq 4$ in $T^c_+$, and let $[a_2, b_2]<\dots<[a_{\ell-1}, b_{\ell-1}]$ be the intervals of size $\geq 3$ in $T_+^c$ that lie between $[a_1,b_1]$ and $[a_\ell, b_\ell]$. Define the following sets:
		\begin{eqnarray*}
			A_1 &=& \{a_1,a_1+1\} \cup \bigcup_{i=1}^{\lfloor \frac{b_1-a_1-1}{3}\rfloor} \{a_1+3i-1, a_1+3i+1\} =\{a_1,a_1+1, a_1+2,a_1+4,\dots\} \subset [a_1,b_1],\\
			A_j &=& \bigcup_{i=0}^{\lfloor \frac{b_j-a_j-2}{3}\rfloor} \{a_j+3i, a_j+3i+2\} = \{a_j,a_j+2,a_j+3,a_j+5,\dots\}\subset [a_j,b_j],  \,\,\, 2\leq j \leq \ell-1,\\
			A_\ell&=& \Big(\bigcup_{i=2}^{\lfloor \frac{b_\ell-a_\ell}{3}\rfloor}\{b_\ell-3i, b_\ell-3i+2\}\Big) \cup \{b_\ell-3, b_\ell-2, b_\ell\} =\{\dots, b_\ell-3, b_\ell-2, b_\ell\} \subset [a_\ell,b_\ell]. 
			\end{eqnarray*} 
	Note that $\sum_{j=1}^\ell|A_j|<2k$ as otherwise, by Lemma \ref{Lem:facets-of-Delta}, the set $\cup_{j=1}^\ell A_j \subset T^c_+$ would contain a facet of $\Delta^{2k}_n$. But such a facet	would be disjoint from $T$ contradicting our assumption that $T$ is a transversal of $\Delta^{2k}_n$.	Thus $\ell\leq k-1$ and $$\left\lfloor \frac{b_1-a_1+2}{3}\right\rfloor +\left\lfloor \frac{b_\ell-a_\ell}{3}\right\rfloor+\sum_{j=2}^{\ell-1} \left\lfloor \frac{b_j-a_j+1}{3}\right\rfloor\leq k-1.$$ We conclude that $\sum_{j=1}^\ell \big|[a_j,b_j]\big|= \ell+\sum_{j=1}^\ell(b_j-a_j)=O(k)$, as claimed.
			
	\smallskip\noindent{\bf Claim 2}:	{\em The number of singleton intervals in $T^+\cap [b_1+1,n]$ that are adjacent on the right to an interval in $T^c_+$ of size $\geq 2$ is at most $k-1$.}	\\
	\proof  
		If there were at least $k$ such intervals, say, $\{y_1\}, \dots, \{y_k\}$, then, by Lemma \ref{Lem:facets-of-Delta}, $$F\coloneqq\{a_1,a_1+1\} \cup \big(\bigcup_{i=2}^{k-1} \{ y_i-1,y_i+1\}\big) \cup \{y_k-1, y_k+1,y_k+2\}$$ would be a facet of  $\Delta^{2k}_n$ that is disjoint from $T$. This is impossible since $T$ is a transversal of $\Delta^{2k}_n$.

			To finish the proof of the proposition, observe that the above two claims imply that there is a set $C$ of size $O(k)$ such that $[a_1,n]\backslash C$ is the disjoint union of pairs of adjacent intervals $(I,J)$ where $I\subset T_+$, $J\subset T^c_+$ and such that $|I|\geq 2, |J|\leq 3$ or $|I|=|J|=1$. In particular, $T_+\cap [a_1,n]\geq \frac{2}{5}(n-a_1)-O(k)$.
			
			The case of $d=2k+1$ is very similar: the main difference is that in this case, using Lemma \ref{rem:facets-of-Delta} and taking $[a_\ell, b_\ell]$ to be the last interval of size $\geq 3$, allows to strengthen Claim 1 to the statement that {\em the intervals in $T^c_+$ of size $\geq 3$ together occupy at most $O(k)$ elements.} 
    \end{proof}
		
		We would like to point out that $\lim_{n\to\infty} \mu(\Delta^3_n) =1/2$ while $\lim_{n\to\infty} \mu(\Delta^4_n)=2/5$. Indeed, by looking at the complete list of facets of $\Delta^3_n$ and $\Delta^4_n$ (see \cite[Corollary 3.2]{NZ-cs-neighborly-new} and \cite[Proof of Theorem 5.13]{Pfeifle-20}, respectively), one can easily check that $T=(\pm\{1,3,5,7,\dots\}\cap V_n)\cup\{\pm n\}$ is a transversal of $H(\Delta^3_n)$; similarly, $T=(\pm\{1,2,6,7,11,12,\dots\}\cap V_n) \cup \{\pm n\}$ is a transversal of $H(\Delta^4_n)$.
    
		To summarize the discussion of this section, the following lower bounds on $\mu_d$ are known at present: $\mu_3=1/2$,  $\mu_4\geq 11/21$,  and for $k\geq 3$, $\mu_{2k}\geq 1/2$ while $\mu_{2k-1}\geq 2/5$. For $d>3$, no upper bounds on $\mu_d$, except for the trivial one $\mu_d\leq 1$, are known. Shedding any new light on possible values of $\mu_d$ would be of great interest; so will be shedding any light on possible values of $\lambda_d$ from Problem \ref{prob:flag}. At the moment nothing at all is known about these numbers.

\section*{Acknowledgments} We are grateful to Rowan Rowlands for his comments on the preliminary version of this paper.
		
	{\small
		\bibliography{refs}
		\bibliographystyle{plain}
	}
\end{document}